\newtheorem{theorem}{Theorem}[section]
\newtheorem{algorithm}[theorem]{Algorithm}
\newtheorem{definition}[theorem]{Definition}
\newtheorem{proposition}[theorem]{Proposition}
\theoremstyle{definition}
\newtheorem{remark}[theorem]{Remark}
\def\sphline{\noalign{\vskip3pt}\hline\noalign{\vskip3pt}}
\def\XXint#1#2#3{{\setbox0=\hbox{$#1{#2#3}{\int}$}
     \vcenter{\hbox{$#2#3$}}\kern-.5\wd0}}
\def\ud{{\rm\,d}}
\def\C{\mathbb{C}}
\def\N{\mathbb{N}}
\def\P{\mathbb{P}}
\def\R{\mathbb{R}}
\def\AA{\mathcal{A}}
\def\BB{\mathcal{B}}
\def\CC{\mathcal{C}}
\def\DD{\mathcal{D}}
\def\HH{\mathcal{H}}
\def\II{\mathcal{I}}
\def\LL{\mathcal{L}}
\def\MM{\mathcal{M}}
\def\NN{\mathcal{N}}
\def\OO{\mathcal{O}}
\def\QQ{\mathcal{Q}}
\def\VV{\mathcal{V}}
\def\pr(#1){\left({#1}\right)}
\def\br[#1]{\left[{#1}\right]}
\def\abs#1{\left|{#1}\right|}
\def\norm#1{\left\|{#1}\right\|}
\def\ii{{\rm i}}
\def\for{\hbox{ for }}
\newcommand{\tril}{\operatorname{tril}}
\newcommand{\bs}[1]{\boldsymbol{#1}}
\DeclareRobustCommand{\rchi}{{\mathpalette\irchi\relax}}
\newcommand{\irchi}[2]{\raisebox{\depth}{$#1\chi$}} 
\DeclareSymbolFont{mysymbols}{LS1}{stixscr}{m}{n}
\DeclareSymbolFont{myarrows3}{LS2}{stixtt}{m}{n}
\DeclareSymbolFont{myintegrals}{LS2}{stixcal}{m}{n}
\DeclareMathSymbol{\myvdots}{\mathrel}{mysymbols}{"34}
\DeclareMathSymbol{\mycdots}{\mathrel}{mysymbols}{"35}
\DeclareMathSymbol{\myadots}{\mathrel}{mysymbols}{"36}
\DeclareMathSymbol{\myddots}{\mathrel}{mysymbols}{"37}
\DeclareMathSymbol{\blacksquare}{\mathord}{mysymbols}{"B7}
\DeclareMathSymbol{\square}{\mathord}{mysymbols}{"B8}
\DeclareMathSymbol{\lrblacktriangle}{\mathord}{mysymbols}{"F9}
\DeclareMathSymbol{\llblacktriangle}{\mathord}{mysymbols}{"FA}
\DeclareMathSymbol{\urblacktriangle}{\mathord}{mysymbols}{"FB}
\DeclareMathSymbol{\urblacktriangle}{\mathord}{mysymbols}{"FC}
\DeclareMathSymbol{\urtriangle}{\mathord}{myarrows3}{"98}
\DeclareMathSymbol{\lltriangle}{\mathord}{myarrows3}{"99}
\DeclareMathSymbol{\mydiagdown}{\mathord}{myintegrals}{"3C}
\begin{document}

\title{On symmetrizing the ultraspherical spectral method for self-adjoint problems}

\author{%
{\sc Jared Lee Aurentz\thanks{Email: Jared.Aurentz@icmat.es}~~and Richard Mika\"el Slevinsky\thanks{Email: Richard.Slevinsky@umanitoba.ca}}\\[2pt]
Instituto de Ciencias Matem\'aticas, 28049, Madrid, Spain\\
Department of Mathematics, University of Manitoba, Winnipeg, Canada}

\maketitle

\begin{abstract}
A mechanism is described to symmetrize the ultraspherical spectral method for self-adjoint problems. The resulting discretizations are symmetric and banded. An algorithm is presented for an adaptive spectral decomposition of self-adjoint operators. Several applications are explored to demonstrate the properties of the symmetrizer and the adaptive spectral decomposition.

{\em Keywords}: spectral methods, symmetric-definite and banded eigenvalue problems, infinite-dimensional linear algebra.
\end{abstract}

\section{Introduction}

Let $H$ be a separable Hilbert space with inner product $\langle\cdot,\cdot\rangle$ and norm $\norm{\cdot}$. We are interested in the solution of the self-adjoint linear differential eigenvalue problem with self-adjoint boundary conditions posed on a subset of the real line, $D\subset\R$. The $2N^{\rm th}$ order problem in self-adjoint form reads:
\begin{equation}\label{eq:EVP}
\LL u := \left[(-\DD)^N(p_N\DD^N) + (-\DD)^{N-1}(p_{N-1}\DD^{N-1}) + \cdots + p_0\right]u = \lambda w u,\qquad \BB u = 0,
\end{equation}
where the usual conditions on the coefficients $p_i$ and $w$ are required for sensible solution; at the very least, $p_N\ne 0$, and $w\ge 0$ and $w = 0$ only on a set of Lebesgue measure zero. In this form, $\LL$ is a self-adjoint linear differential operator in $H = L^2(D,w(x)\ud x)$ provided it is accompanied by $\BB : H \to \C^{2N}$, a set of $2N$ self-adjoint boundary conditions. We will also assume that $p_i\in\P_m$, that is, the variable coefficients are at most degree-$m$ polynomials. The self-adjoint linear differential eigenvalue problem is an important problem in applied mathematics, with many scientific contributions furthering our collective understanding, too many to all be cited.

Given the quotient Hilbert space $H_\BB := \{ u \in H : \BB u = 0\}$, a classical approach calls for a Ritz--Galerkin discretization for the trial and test functions taken from an $n$-dimensional subspace $V_n\subset H_\BB$. Indeed, when $V_n = \P_{n-1}$ and an orthonormal polynomial basis satisfying the boundary conditions is utilized, the discrete realization of $\langle u, \LL v\rangle$ is automatically symmetric (hermitian). Unfortunately, the resulting discretizations are dense. Thus data-sparsity in problem formulations, such as Eq.~\eqref{eq:EVP} with finite-degree polynomial coefficients, is ignored when transformed into a computational problem, resulting in $\OO(n^3)$ complexity algorithms~\cite{Golub-Van-Loan-13} for approximate eigenvalues and eigenfunctions.

Another classical approach is a pseudospectral method implemented by collocation~\cite{Canuto-23-815-86,Driscoll-229-5980-10}. Apart from the boundary conditions, differentiation matrices resulting from spectral collocation have many desirable properties such as (skew) centrosymmetry. The traditional wisdom is that, symmetry notwithstanding, even-ordered spectral differentiation matrices have real and distinct eigenvalues~\cite{Gottlieb-Lustman-20-909-83}. One of the main draws to collocation is the ease of implementation; variable coefficients are implemented as diagonal scalings. Some formulations are even symmetric for Sturm--Liouville problems~\cite{Chen-Ma-206-748-08}. Unfortunately, the discretizations are also dense and join the Ritz--Galerkin discretization behind the $\OO(n^3)$ computational barrier.

A contemporary approach to the solution of linear differential equations with polynomial coefficients is the Olver--Townsend ultraspherical spectral method~\cite{Olver-Townsend-55-462-13}. Compared with the finite-dimensional classical schemes, it is an infinite-dimensional Petrov--Galerkin scheme that utilizes multiple ultraspherical polynomial bases to represent a linear differential operator such as the left-hand side of Eq.~\eqref{eq:EVP} as sparsely as possible. This spectral method is based on the facts that: differentiation of Chebyshev polynomials of the first kind is diagonal when represented in terms of ultraspherical polynomials $C_n^{(\lambda)}$ of parameter $\lambda=1$; subsequent differentiations are also diagonal when represented in higher order ultraspherical polynomial bases; conversion from a representation in low order ultraspherical polynomials to higher order ones is banded; and, polynomial coefficient multiplication is banded. Crucially, the bandedness implies that for linear differential equations of the form:
\begin{equation}
\LL u = f,\quad{\rm with}\quad \BB u = c,
\end{equation}
representations of the solution are achieved in $\OO(m^2 n)$ complexity, and the degree $n$ may be determined adaptively through a running estimate of the forward error without restarting the partial $QR$ factorization of $\LL$. When posing Eq.~\eqref{eq:EVP} as an eigenvalue problem, one approach in adaptations of the ultraspherical spectral method to polar~\cite{Vasil-et-al-325-53-16} and spherical~\cite{Lecoanet-et-al-1804-09283} geometries is to impose:
\begin{equation}
\begin{pmatrix} \BB\\ \LL\end{pmatrix} u = \lambda\begin{pmatrix} 0\\ \CC\end{pmatrix} u,
\end{equation}
where $\CC$ is the operator that would convert the representation of the solution in the ultraspherical polynomial basis of the domain of $\LL$ to the basis for its range. Unfortunately, even for a self-adjoint linear differential eigenvalue problem with self-adjoint boundary conditions, symmetry appears to be sacrificed for the banded sparsity\footnote{In exceptional circumstances, such as when $\BB$ is zero apart from the first few columns and the nonzero part prepended to $\LL$ renders the block operator discretization self-adjoint, symmetry may still be within reach. Many common boundary conditions, however, are genuinely infinite-dimensional. This destruction of symmetry has real consequences for the spectra of finite truncations. Whereas Chebyshev and Legendre second-order differentiation matrices by collocation have real spectra~\cite{Weideman-Trefethen-25-1279-88}, it is easy to show by counter-example that finite truncations of ultraspherical discretizations of self-adjoint operators may produce complex spectra.}. 

In this work, we describe a procedure to formulate self-adjoint problems such as Eq.~\eqref{eq:EVP} in order that symmetry may transcend the discretization and that data-sparsity in the problem is preserved in the form of a banded discretization. Thus, we describe a procedure to sparsify the Ritz--Galerkin scheme or to symmetrize the ultraspherical spectral method. With symmetric-definite banded spectral discretizations, $\OO(m n^2)$ complexity partial diagonalization is possible through finite-dimensional truncation and numerical linear algebra. We also present an adaptive infinite-dimensional spectral decomposition of self-adjoint linear differential operators that deflates resolved eigenpairs on-the-fly and for certain problems, achieves linear complexity partial spectral decompositions. There are many advantages of preserving symmetry. Symmetry ensures that the general Rayleigh quotient iteration~\cite{Rayleigh-37,Parlett-28-679-74} converges cubically rather than quadratically. And for time-stepping schemes, symmetry precludes rogue eigenvalues from being scattered spuriously in the complex plane. There are also advantages of preserving the data-sparsity via a small bandwidth. This enables spectral analysis of infinite-dimensional discretizations via, e.g. Gerschgorin disks~\cite{Golub-Van-Loan-13} and its generalizations~\cite{Nakatsukasa-80-2127-11}. One disadvantage of the ultraspherical spectral method is that it applies to the strong formulation of the problem, preventing the resolution of weak eigenfunctions unless they are as smooth as the strong ones.

There are a handful of spectral schemes that lead to symmetric-definite and banded discretizations in the literature. These include the Fourier spectral method with periodic boundary conditions~\cite{Boyd-00} and the Hermite function and Tanh--Chebyshev spectral methods~\cite{Iserles-Webb-NA2019/01} as two examples of many orthogonal systems on $L^2(\R)$. There are also examples with nontrivial boundary conditions as well. In~\cite{Shen-15-1489-94}, Shen finds symmetric and banded spectral Legendre-based discretizations of constant coefficient second- and fourth-order problems. He repeats the procedure with Chebyshev-based discretizations~\cite{Shen-16-74-95} and while the efficiency is comparable, the symmetry is lost. Banded symmetry is also common in the finite element method~\cite{Beuchler-Schoberl-103-339-06}. Through examples, we will show precisely when we should anticipate classical and other orthogonal polynomial-based discretizations for self-adjoint problems lead to self-adjoint and banded discretizations, even on non-compact domains and with piecewise-defined bases.

Polynomial approximation theory suggests that we should seek to numerically represent the variable coefficients in Eq.~\eqref{eq:EVP} as Chebyshev polynomial expansions~\cite{Trefethen-12}. An adaptation of the fast multipole method (FMM)~\cite{Greengard-Rokhlin-73-325-87} accelerates the Chebyshev--Legendre~\cite{Alpert-Rokhlin-12-158-91}, ultraspherical--ultraspherical~\cite{Keiner-31-2151-09}, and Jacobi--Jacobi~\cite[\S 3.3]{Keiner-11} connection problems to linear complexity in the degree. Provided the Jacobi parameters are not too large, the acceleration of the connection problem enables Jacobi polynomial expansions with nearly the same rapidity as Chebyshev polynomial expansions. Thus in the problems in this paper, the variable coefficients in Eq.~\eqref{eq:EVP} are in fact polynomials, but in practice they could very well be good numerical approximations to functions in a particular space. How such approximations, in particular of the leading coefficient $p_N$, affect the spectrum is beyond the scope of this report.

\section{A well-known model problem}\label{section:modelproblem}

It is instructive to begin with a well-known model problem for illustrative purposes.

Consider the self-adjoint Sturm--Liouville problem~\cite{Amrein-Hinz-Pearson-05}:
\begin{equation}
-\DD^2 u = \lambda u, \qquad u(\pm1) = 0.
\end{equation}
Suppose we represent eigenfunctions in terms of the weighted Jacobi polynomial expansion:
\begin{equation}
u(x) = \sum_{n=0}^\infty u_n (1-x^2)\tilde{P}_n^{(1,1)}(x).
\end{equation}
Here, we use the tilde to denote orthonormalization of the orthogonal polynomials with respect to their orthogonality measure. In particular, the normalized Jacobi polynomials are related to the standard normalization through~\cite[\S 18.3]{Olver-et-al-NIST-10}:
\begin{equation}
\tilde{P}_n^{(\alpha,\beta)}(x) := \sqrt{\dfrac{(2n+\alpha+\beta+1)\Gamma(n+\alpha+\beta+1)n!}{2^{\alpha+\beta+1}\Gamma(n+\alpha+1)\Gamma(n+\beta+1)}}P_n^{(\alpha,\beta)}(x).
\end{equation}
It is clear that our representation satisfies the Dirichlet boundary conditions. Furthermore, differentiation follows from:
\begin{equation}
\DD\begin{pmatrix} (1-x^2)\tilde{P}_0^{(1,1)}(x) & (1-x^2)\tilde{P}_1^{(1,1)}(x) & \cdots \end{pmatrix} = \begin{pmatrix} \tilde{P}_0^{(0,0)}(x) & \tilde{P}_1^{(0,0)}(x) & \cdots \end{pmatrix}\begin{pmatrix} 0 & -\sqrt{2}\\ & & -\sqrt{6}\\ & & & -\sqrt{12}\\ & & & & \ddots\end{pmatrix},
\end{equation}
and (negative) second differentiation from:
\begin{equation}
-\DD\begin{pmatrix} \tilde{P}_0^{(0,0)}(x) & \tilde{P}_1^{(0,0)}(x) & \cdots \end{pmatrix} = \begin{pmatrix} \tilde{P}_0^{(1,1)}(x) & \tilde{P}_1^{(1,1)}(x) & \cdots \end{pmatrix}\begin{pmatrix} 0\\ -\sqrt{2}\\ & -\sqrt{6}\\ & & -\sqrt{12}\\ & & & \ddots\end{pmatrix}.
\end{equation}
Ultimately, our representation of the negative second derivative is diagonal, when interpreted in the appropriate bases:
\begin{equation}
-\DD^2 = \begin{pmatrix} d_0\\ & d_1\\ & & d_2\\ & & & \ddots\end{pmatrix},\quad{\rm where}\quad d_n = (n+1)(n+2).
\end{equation}
A representation of the proposed eigenfunction in the basis of the expansion of $-\DD^2 u$ can be realized through the symmetric positive-definite pentadiagonal multiplication operator:
\begin{equation}
\MM[1-x^2] = \begin{pmatrix} a_0 & 0 & b_0\\ 0 & a_1 & & \ddots\\ b_0 & & a_2\\ & \ddots & & \ddots\end{pmatrix},\quad{\rm where}\quad a_n = \frac{2(n+1)(n+2)}{(2n+1)(2n+5)},\quad{\rm and}\quad b_n = -\sqrt{\frac{(n+1)(n+2)(n+3)(n+4)}{(2n+3)(2n+5)^2(2n+7)}}.
\end{equation}
In this problem, a symmetric-definite banded Petrov--Galerkin discretization is nearly immediate. The key element is choosing an appropriate polynomial basis for $H_\BB$, and the discretization unfolds naturally.

This problem is well-known and difficult to generalize with the transcendent symmetry because the Dirichlet conditions are so easily enforced by weighting a reasonable polynomial basis by $1-x^2$ that the underlying mechanisms are hidden. To understand the mechanisms at work, we will deconstruct the discretization in abstract terms.

\section{Polynomial bases satisfying the boundary conditions}

Let $\{\phi_n\}_{n=0}^\infty$ be an orthonormal polynomial basis for $H$ with $\deg(\phi_n) = n$. Basis recombination~\cite{Shen-15-1489-94,Shen-16-74-95,Doha-Abd-Elhameed-24-548-02,Doha-Bhrawy-42-137-06,Julien-Watson-228-1480-09,Livermore-229-2046-10,Doha-Bhrawy-64-558-12} is the idea of recombining a usually contiguous subset of a basis to satisfy additional constraints.

For linearly independent boundary conditions in $\BB$, let $\{\rho_n\}_{n=0}^\infty$ be the recombinations of the orthonormal basis $\{\phi_n\}_{n=0}^\infty$ that annihilate $\BB$, guaranteeing $\BB\rho_n = 0$. Note that $\deg(\rho_n)$ is not necessarily $n$ as in the previous problem, but we will assume that $\rho_n$ are nevertheless degree-graded in the generalized sense that there exists a positive integer $\nu$ such that $\deg(\rho_{n+\nu}) > \deg(\rho_n)$ is always true. This assumption suggests the existence of an infinite-dimensional lower-triangular and banded conversion such that:
\begin{equation}\label{eq:rhobyphi}
\begin{pmatrix} \rho_0 & \rho_1 & \cdots \end{pmatrix} = \begin{pmatrix} \phi_0 & \phi_1 & \cdots \end{pmatrix} A.
\end{equation}
With the conversion in hand, a banded $QR$ factorization reveals the isometry mapping the orthonormal polynomial basis for $H$ to the orthonormal polynomial basis $\{\psi_n\}_{n=0}^\infty$ for $H_\BB$:
\begin{equation}\label{eq:psibyphi}
\begin{pmatrix} \psi_0 & \psi_1 & \cdots \end{pmatrix} = \begin{pmatrix} \phi_0 & \phi_1 & \cdots \end{pmatrix} Q,
\end{equation}
and the auxiliary resulting conversion:
\begin{equation}\label{eq:rhobypsi}
\begin{pmatrix} \rho_0 & \rho_1 & \cdots \end{pmatrix} = \begin{pmatrix} \psi_0 & \psi_1 & \cdots \end{pmatrix} R.
\end{equation}
We note that a banded $QR$ factorization of $A\in\C^{(n+2N)\times n}$ requires $\OO(n)$ operations and either factor may be applied to a vector with the same cost. This shows that all three of the aforementioned conversions run in linear complexity without the effort of a dense Gram--Schmidt procedure.

Finally, define the polynomials $\{\sigma_n\}_{n=0}^\infty$ by:
\begin{equation}\label{eq:psibysigma}
\begin{pmatrix} \sigma_0 & \sigma_1 & \cdots \end{pmatrix}R^* = \begin{pmatrix} \psi_0 & \psi_1 & \cdots \end{pmatrix}.
\end{equation}
And since:
\begin{equation}\label{eq:phibypsi}
\begin{pmatrix} \psi_0 & \psi_1 & \cdots \end{pmatrix}Q^* = \begin{pmatrix} \phi_0 & \phi_1 & \cdots \end{pmatrix},
\end{equation}
we gather all three adjoint conversions:
\begin{equation}\label{eq:phibysigma}
\begin{pmatrix} \sigma_0 & \sigma_1 & \cdots \end{pmatrix}A^* = \begin{pmatrix} \phi_0 & \phi_1 & \cdots \end{pmatrix}.
\end{equation}
In order to keep the discussion concrete, we relate the so-defined polynomials to the simple canonical problem of \S~\ref{section:modelproblem}; we identify the orthonormal polynomial basis for $H = L^2([-1,1])$ as the normalized Legendre polynomials, $\phi_n(x) = \tilde{P}_n(x)$. The recombination to satisfy the Dirichlet boundary conditions are in fact weighted Jacobi polynomials:
\begin{equation}
\rho_n(x) = \sqrt{\frac{(n+1)(n+2)}{(2n+1)(2n+3)}}\tilde{P}_n(x) - \sqrt{\frac{(n+1)(n+2)}{(2n+3)(2n+5)}}\tilde{P}_{n+2}(x) = (1-x^2)\tilde{P}_n^{(1,1)}(x),
\end{equation}
by~\cite[\S 18.7.2, 18.9.8]{Olver-et-al-NIST-10}, and furthermore an orthonormal basis for $H_\BB$ is $\psi_n(x) = (1-x^2)\tilde{P}_n^{(2,2)}(x)$. We may also identify the auxiliary polynomials $\sigma_n(x) = \tilde{P}_n^{(1,1)}(x)$, though this identification is serendipitous rather than necessary, and in the general scenario these polynomials are almost never identifiable as classical orthogonal polynomials.

To simplify the notation, we will use a shorthand for an expansion in a basis and a vector of coefficients:
\begin{equation}
u(x) = \sum_{n=0}^\infty \psi_n(x) u_n = \begin{pmatrix} \psi_0 & \psi_1 & \cdots\end{pmatrix}\begin{pmatrix} u_0\\u_1\\\vdots\end{pmatrix} = \bs{\psi}^\top \bs{u}.
\end{equation}

We will now prove that the Petrov--Galerkin scheme for Eq.~\eqref{eq:EVP} is symmetric-definite and banded, where the solution is represented by $u(x) = \bs{\rho}^\top \bs{v}$ and the residuals are estimated using $\bs{\sigma}$.

\subsection{The Petrov--Galerkin scheme is self-adjoint}

Having identified that $\bs{\psi}$ form an orthonormal polynomial basis for $H_\BB$, it follows that the Ritz--Galerkin scheme is self-adjoint:
\begin{equation}
\LL \bs{\psi}^\top \bs{u} = \bs{\psi}^\top L\bs{u} = \lambda w\bs{\psi}^\top \bs{u} = \lambda \bs{\psi}^\top M\bs{u},
\end{equation}
where $L = L^*\in\C^{\infty\times\infty}$ and $M = M^*\in\C^{\infty\times\infty}$ is positive-definite. We let $\bs{u} = R\bs{v}$ and we use Eq.~\eqref{eq:psibysigma} to represent $\bs{\psi}$ in terms of $\bs{\sigma}$. Then we arrive at the Petrov--Galerkin scheme:
\begin{equation}
\bs{\sigma}^\top R^*LR \bs{v} = \lambda \bs{\sigma}^\top R^*MR\bs{v}.
\end{equation}
Clearly, both $R^*LR$ and $R^*MR$ are self-adjoint and the latter is also positive-definite.

\subsection{The Petrov--Galerkin scheme is banded}

Let us consider the Petrov--Galerkin scheme:
\begin{equation}
\LL \bs{\rho}^\top \bs{v} = \lambda w \bs{\rho}^\top \bs{v}.
\end{equation}
Combining Eqs.~\eqref{eq:rhobyphi} and~\eqref{eq:phibysigma}, we know that $w\bs{\rho}^\top = w\bs{\phi}^\top A = \bs{\phi}^\top M_B A = \bs{\sigma}^\top A^*M_B A$, where $M_B = M_B^*$ is also positive-definite and banded by executing the Clenshaw--Smith algorithm with the orthonormal polynomials' Jacobi operator~\cite[\S 4.1]{Slevinsky-Olver-332-290-17}. Similarly, $\LL \bs{\rho}^\top = \LL \bs{\phi}^\top A$, and by the ultraspherical spectral method~\cite{Olver-Townsend-55-462-13}, there exists a banded discretization of $\LL$, namely $L_B$, mapping (normalized) Legendre polynomials to (normalized) Jacobi polynomials with integer parameters in proportion to the highest derivatives. Applying the inverse (banded) conversion operator, and further utilizing $\bs{\phi}^\top = \bs{\sigma}^\top A^*$, we arrive at:
\begin{equation}\label{eq:PetrovGalerkinisBanded}
\bs{\sigma}^\top A^* C^{-1} L_B A \bs{v} = \lambda \bs{\sigma}^\top A^*M_BA\bs{v}.
\end{equation}
At first glance, this formula does not appear to guarantee a banded discretization, for only in special circumstances are the inverses of banded matrices banded themselves~\cite{Strang-107-12413-10}. To show that it is indeed banded, we note that the conversion matrix $C$ is upper-triangular and banded, which means that its inverse will {\em not} extend the bandwidth of $L_BA$ any lower. Since $A^*$ is also upper-triangular and since the Petrov--Galerkin scheme is self-adjoint, we conclude that it must be banded above as well.

\begin{remark}
\begin{enumerate}
\item This proof does not imply that $C^{-1} L_B$ is self-adjoint; this operator may be strictly upper-triangular as in the model problem! But the composition with $A^*$ and $A$ on either side ensures the final scheme is self-adjoint.
\item The choice of normalization of the polynomial basis that satisfies the boundary conditions amounts to a congruence transformation, therefore does not affect the symmetry. On the other hand, the operator $C^{-1} L_B$ must be posed on the orthonormal basis rather than any other normalization.
\end{enumerate}
\end{remark}

\subsection{Analysis of the conversions $A$ and $R$}

The orthogonal polynomial basis for $H_\BB$ has already been described by Livermore~\cite{Livermore-229-2046-10}, who calls them Galerkin orthogonal polynomials. Livermore presents analytic and asymptotic results for the polynomials to suggest that they are reasonable bases for the constrained Hilbert space, as they mimic many of the properties of the original orthogonal polynomials for $H$.

Here, our analysis of the basis recombination is more abstract.

\begin{definition}
Let $\ell_\lambda^2\subset\C^\infty$ be the discrete Sobolev space equipped with the norm:
\begin{equation}
\norm{\bs{u}}_{\ell_\lambda^2}^2 = \sum_{n=0}^\infty \abs{u_n}^2(n+1)^{2\lambda}.
\end{equation}
When $\lambda = 0$, it is convenient to drop it from the notation.
\end{definition}
Colloquially, we will refer to the conversion matrices $A$ as {\em annihilators} in relation to $\BB A = 0$. For infinite-dimensional annihilators $A\in\C^{\infty\times\infty}$ from Eq.~\eqref{eq:rhobyphi}, we wish to show that the induced $\ell^2$-norm:
\begin{equation}
\norm{A}_{\ell^2\to\ell^2} := \sup_{\norm{\bs{x}}_{\ell^2}\le 1} \norm{A\bs{x}}_{\ell^2},
\end{equation}
is bounded. For if $A:\ell^2\to\ell^2$ is a bounded operator, then the new polynomials $\{\rho_n\}$ form a basis for the constrained Hilbert space $H_\BB$, and furthermore the $QR$ factorization exists in the strong operator topology~\cite[Theorem 31]{Hansen-254-2092-08}. This argument does not prove that the annihilator is numerically well-conditioned, as this also depends on a bound for the norm of the inverse of the principal finite-dimensional section of $A$.

In general:
\begin{equation}
\BB =\begin{pmatrix}
b_{0,0} & b_{0,1} & b_{0,2} & \cdots\\
b_{1,0} & b_{1,1} & b_{1,2} & \cdots\\
\vdots & \vdots & \vdots & \cdots\\
b_{2N-1,0} & b_{2N-1,1} & b_{2N-1,2} & \cdots\\
\end{pmatrix}\in \C^{2N\times\infty},
\end{equation}
and the matrix $A$ is uniquely determined if it is unit lower-triangular and banded and if it annihilates $\BB$. Thus the nontrivial part of the $k^{\rm th}$ column of $A$ must satisfy:
\begin{equation}\label{eq:LinearSystemForA}
\begin{pmatrix}
b_{0,k+1} & b_{0,k+2} & \cdots & b_{0,k+2N}\\
b_{1,k+1} & b_{1,k+2} & \cdots & b_{1,k+2N}\\
\vdots & \vdots & \ddots & \vdots\\
b_{2N-1,k+1} & b_{2N-1,k+2} & \cdots & b_{2N-1,k+2N}
\end{pmatrix}
\begin{pmatrix}
a_{k+1,k}\\
a_{k+2,k}\\
\vdots\\
a_{k+2N,k}
\end{pmatrix}
=
-\begin{pmatrix}
b_{0,k}\\
b_{1,k}\\
\vdots\\
b_{2N-1,k}
\end{pmatrix}.
\end{equation}
A proof of the existence and uniqueness of a solution to Eq.~\eqref{eq:LinearSystemForA} as well as an upper bound on $\norm{a_{:,k}}_{\ell^2}$ would require a bound on the norm of the inverse of every matrix defined by Eq.~\eqref{eq:LinearSystemForA}. Below, we prove a simple result for practical applications.
\begin{theorem}
Let $\BB\in\R^{2\times\infty}$ be a pair of real linear functionals such that:
\begin{align}
c_{i,0}(n+1)^{\lambda_i} \le \abs{b_{i,n}} \le c_{i,1}(n+1)^{\lambda_i},\qquad i = 0,1,
\end{align}
where $0\le \lambda_0,\lambda_1<\infty$ and $0<c_{0,0},c_{0,1},c_{1,0},c_{1,1}<\infty$, and furthermore assume that the coefficients of one functional are positive and the other alternate in sign. Then $\norm{A}_{\ell^2\to\ell^2} < \infty$ and the infinite-dimensional $QR$ factorization exists.
\end{theorem}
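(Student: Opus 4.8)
The plan is to reduce the entire statement to a uniform bound on the two nontrivial entries in each column of $A$; once $A$ is shown to be a bounded operator on $\ell^2$, the existence of the infinite-dimensional $QR$ factorization follows from the discussion preceding the theorem, i.e.\ from~\cite[Theorem 31]{Hansen-254-2092-08}. Since here $2N = 2$, the matrix $A$ is unit lower-triangular and banded with lower bandwidth $2$, and by Eq.~\eqref{eq:LinearSystemForA} the pair $(a_{k+1,k},a_{k+2,k})$ solves a $2\times2$ linear system whose determinant is $\Delta_k := b_{0,k+1}b_{1,k+2} - b_{0,k+2}b_{1,k+1}$. Cramer's rule then writes $a_{k+1,k}$ and $a_{k+2,k}$ as ratios over $\Delta_k$ of $2\times2$ determinants; each numerator is a difference of two products, each product carrying exactly one factor from the $b_{0,\cdot}$ row and one from the $b_{1,\cdot}$ row with both indices in $\{k,k+1,k+2\}$. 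So I need a lower bound on $\abs{\Delta_k}$ and an upper bound on the numerators, both uniform in $k$.

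The lower bound on $\abs{\Delta_k}$ is the crux and the only place where the sign hypothesis enters; I expect everything else to be routine estimation. Assume without loss of generality that $b_{0,n}>0$ for all $n$ while $b_{1,n}$ alternates in sign. Then $b_{1,k+1}$ and $b_{1,k+2}$ have opposite signs, so the two terms $b_{0,k+1}b_{1,k+2}$ and $-\,b_{0,k+2}b_{1,k+1}$ of $\Delta_k$ carry the \emph{same} sign, no cancellation is possible, and
\begin{equation}
\abs{\Delta_k} = \abs{b_{0,k+1}}\,\abs{b_{1,k+2}} + \abs{b_{0,k+2}}\,\abs{b_{1,k+1}} \ge c_{0,0}c_{1,0}\left[(k+2)^{\lambda_0}(k+3)^{\lambda_1} + (k+3)^{\lambda_0}(k+2)^{\lambda_1}\right] \ge 2\,c_{0,0}c_{1,0}\,(k+2)^{\lambda_0+\lambda_1},
\end{equation}
using AM--GM and $k+3\ge k+2$ in the last step. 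In particular $\Delta_k\ne0$ for all $k$, which also settles the existence and uniqueness of $A$ that was left open after Eq.~\eqref{eq:LinearSystemForA}.

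For the numerators, the upper bounds $\abs{b_{i,n}}\le c_{i,1}(n+1)^{\lambda_i}$ together with $(k+j)^{\lambda}\le j^{\lambda}(k+1)^{\lambda}$ for $j\in\{1,2,3\}$ (valid since $\lambda_i\ge0$) and the triangle inequality bound every numerator by a constant depending only on $c_{0,1},c_{1,1},\lambda_0,\lambda_1$ times $(k+1)^{\lambda_0+\lambda_1}$. Dividing by the lower bound for $\abs{\Delta_k}$ shows $\sup_k\abs{a_{k+1,k}}<\infty$ and $\sup_k\abs{a_{k+2,k}}<\infty$; hence the entries of $A$ are bounded by some finite $M$ (the diagonal being $1$) and $A$ has at most three nonzeros per row and per column. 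The Schur test then gives $\norm{A}_{\ell^2\to\ell^2}\le 3M<\infty$, and the $QR$ factorization exists as explained above. The one caveat, already flagged in the text, is that this bounds $A$ but says nothing about $\norm{A^{-1}}$ on finite sections, so it does not settle the numerical conditioning of the annihilator.
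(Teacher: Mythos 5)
Your proposal is correct and follows essentially the same route as the paper: Cramer's rule on the $2\times2$ system of Eq.~\eqref{eq:LinearSystemForA}, the sign hypothesis to rule out cancellation in the determinant and get the lower bound $c_{0,0}c_{1,0}\left[(k+2)^{\lambda_0}(k+3)^{\lambda_1}+(k+3)^{\lambda_0}(k+2)^{\lambda_1}\right]$, uniform boundedness of the two sub-diagonal entries, and then existence of the $QR$ factorization from boundedness of $A$. The only cosmetic difference is that you close with a Schur-test bound $\norm{A}_{\ell^2\to\ell^2}\le 3M$ whereas the paper cites the singular-value estimate of~\cite{Qi-56-105-84}; both are equivalent ways to bound a banded operator with uniformly bounded entries.
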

\begin{proof}
The solution of the linear system in Eq.~\eqref{eq:LinearSystemForA} is:
\begin{equation}
\begin{pmatrix}
a_{k+1,k}\\~\\
a_{k+2,k}
\end{pmatrix}
=
\begin{pmatrix}
\dfrac{b_{0,k+2}b_{1,k} - b_{1,k+2}b_{0,k}}{b_{0,k+1}b_{1,k+2}-b_{0,k+2}b_{1,k+1}}\\~\\
\dfrac{b_{1,k+1}b_{0,k} - b_{0,k+1}b_{1,k}}{b_{0,k+1}b_{1,k+2}-b_{0,k+2}b_{1,k+1}}
\end{pmatrix}.
\end{equation}
As one of the functionals is alternating in sign:
\begin{align}
\abs{b_{0,k+1}b_{1,k+2}-b_{0,k+2}b_{1,k+1}} & = \abs{b_{0,k+1}b_{1,k+2}}+\abs{b_{0,k+2}b_{1,k+1}},\\
& \ge c_{0,0}c_{1,0}\left[(k+2)^{\lambda_0}(k+3)^{\lambda_1}+(k+3)^{\lambda_0}(k+2)^{\lambda_1}\right],
\end{align}
and it follows that for every $\lambda_0$ and $\lambda_1$, both sub-diagonal entries are absolutely bounded, independent of $k$. By an estimate on the singular values of a matrix~\cite{Qi-56-105-84}:
\begin{equation}
\norm{A}_{\ell^2\to\ell^2} \le \sup_{k\in\N_0}\max \left(1+\abs{a_{k+1,k}}+\abs{a_{k+2,k}}, \abs{a_{k+2,k}}+\abs{a_{k+2,k+1}}+1\right) < \infty.
\end{equation}
Since $A$ is bounded, the $QR$ factorization exists and $\norm{A}_{\ell^2\to\ell^2} = \norm{R}_{\ell^2\to\ell^2} < \infty$.
\end{proof}

For concreteness, consider the case of separated boundary conditions on the interval $[-1,1]$. Legendre polynomials are positive at the right endpoint and alternate in sign at the other~\cite[\S 18.6]{Olver-et-al-NIST-10}. Since they and their derivatives at the endpoints are bounded above and below by algebraic powers of the degree, a pair of separated boundary conditions of the form $a_\pm u(\pm1) + b_\pm u'(\pm1) = 0$, with the conditions $a_+ b_+ > 0$ and $a_-b_- < 0$, produce bounded conversion operators $A$ and $R$.

An alternative to contiguous basis recombination of the form $\phi_n + a_{n+1,n}\phi_{n+1} + a_{n+2,n}\phi_{n+2}$ is the recombination with only the first two elements, $\phi_n + \hat{a}_{0,n}\phi_0 + \hat{a}_{1,n}\phi_1$, which has also been discussed elsewhere~\cite{Shen-15-1489-94,Livermore-229-2046-10}. Not only are such recombinations not banded, but they are also not bounded in the above scenario, which demonstrates the superiority of the contiguous approach.

\subsection{Implementation details}

The solution of the linear system in Eq.~\eqref{eq:LinearSystemForA} is performed by a pivoted $LU$ factorization. The number of boundary conditions is usually quite small, say $N\le10$, so that instability is rarely an issue in practice. Although we anticipate algebraic growth in the functionals that realize the boundary conditions, if $b_{2N-1,n} \le (n+1)^{2N}$, then overflow in IEEE $64$-bit floating-point arithmetic only begins after $n \ge 2^{\frac{512}{N}}$, which is rather large for spectral methods on modern computers. This method is referred to as the standard method.

At the very least, self-adjoint boundary conditions $\BB$ must be linearly independent. But linear independence in the infinite-dimensional sense does not imply that each and every linear system in Eq.~\eqref{eq:LinearSystemForA} is soluble. There are a few pathological combinations of boundary conditions and bases that require careful attention. To not overburden the reader, we describe the implementation details for boundary conditions on orthogonal polynomials with the standard normalization.

\subsubsection{The basis satisfies some of the boundary conditions}

One polynomial basis for $L^2([-1,1])$ consists of $\{1,x\}\oplus\{P_n(x)-P_{n-2}(x)\}_{n=2}^\infty$, and it is not unreasonable to use this basis to satisfy Dirichlet and Neumann boundary conditions for a fourth-order eigenvalue problem, $u(\pm1) = u'(\pm1) = 0$. In fact, every element of the basis for $n\ge2$ already satisfies the Dirichlet boundary conditions. The boundary conditions are:
\begin{equation}
\BB = \begin{pmatrix} 1 & -1 & 0 & 0 & 0 & 0 & 0 & \cdots\\ 1 & 1 & 0 & 0 & 0 & 0 & 0 & \cdots\\ 0 & 1 & -3 & 5 & -7 & 9 & -11 & \cdots\\ 0 & 1 & 3 & 5 & 7 & 9 & 11 & \cdots\end{pmatrix}.
\end{equation}
Most of this $\BB$'s finite sections in use in Eq.~\eqref{eq:LinearSystemForA} are row-rank deficient.

\subsubsection{A column of zeros appears}

For a second-order problem, consider the self-adjoint and separated boundary conditions:
\begin{equation}\label{eq:ZeroColumnBoundaryConditions}
\nu(\nu+1) u(1) - 2u'(1) = \nu(\nu+1) u(-1) + 2u'(-1) = 0.
\end{equation}
If we pose these boundary conditions on the Legendre polynomials, then a column of zeros appears precisely when $\nu = n$. This is because the degree-$n$ Legendre polynomial already satisfies the boundary conditions. In fact, analytical formulas for the Legendre recombination of the Robin boundary conditions, $a_\pm u(\pm1) + b_\pm u'(\pm1) = 0$, are presented by Doha and Bhrawy~\cite[\S 2.1]{Doha-Bhrawy-64-558-12}:
\begin{equation}
\rho_n(x) = P_n(x) + \zeta_n(a_\pm,b_\pm) P_{n+1}(x) + \eta_n(a_\pm, b_\pm) P_{n+2}(x),
\end{equation}
where:
\begin{align}
\zeta_n(a_\pm,b_\pm) & = -\frac{2(2n+3)(a_+b_-+b_+a_-)}{2((n+2)^2b_--2a_-)+(n+2)^2b_+((n+1)(n+3)b_--2a_-)},\quad{\rm and}\\
\eta_n(a_\pm,b_\pm) & = -\frac{2a_+((n+1)^2b_--2a_-)+(n+1)^2b_+(n(n+2)b_--2a_-)}{2((n+2)^2b_--2a_-)+(n+2)^2b_+((n+1)(n+3)b_--2a_-)}.
\end{align}
We stress that this analytical method fails in the neighbourhood of $n = \nu$, for the reasons described above. Surely, $\BB P_\nu \equiv 0,$ but this creates a column of zeros in the linear functionals, rendering the adjacent linear systems column-rank deficient. Rank-deficient boundary conditions were also overlooked for ultraspherical and Jacobi polynomial recombinations~\cite[\S 4.1]{Doha-Abd-Elhameed-24-548-02,Doha-Bhrawy-42-137-06}.

Both of these pathological scenarios are soluble by doubling the lower bandwidth of $A$. This leads to our so-called pathological method, which returns a minimum $2$-norm solution for the $k^{\rm th}$ column of $A$ using the matrix $\BB_{0:2N-1, k+1:k+4N}$ and the same right-hand side as in the standard method. As the systems may be rank deficient, we solve them using a Moore--Penrose pseudoinverse~\cite{Golub-Van-Loan-13} rather than a complete orthogonal factorization. We note that both of these scenarios are pathological but almost never appear. This can be argued because there are a countably infinite subset of boundary conditions, with Lebesgue measure zero, such as those described in Eq.~\eqref{eq:ZeroColumnBoundaryConditions} of an uncountably infinite set in general, since $a_\pm$ and $b_\pm$ range over the reals. 

Although we do not have a formal proof that doubling the lower bandwidth ensures the existence of $A$ for $2N^{\rm th}$ order problems, our rationale is that the $2N$ boundary conditions are linear combinations of the $4N$ endpoint conditions $u^{(\nu)}(\pm1)$, where $\nu=0,\ldots,2N-1$, that are linearly independent in the infinite-dimensional sense. And although most boundary conditions require only $2N$ lower bands, some may require $4N$. It is not inconceivable, however, that there may be other pathological boundary conditions not soluble by this method; after all, $\BB$ is infinite-dimensional.

The difficulty in the implementation of $C^{-1}L_B$ is the appearance of the inverse of an upper-triangular banded matrix. Although inverses of banded matrices are efficiently represented by semi-separable matrices~\cite[Chap. 14]{Vandebril-Van-Barel-Mastronardi-1-08}, only the lower-triangular part of $C^{-1}L_B$ is required, as the upper-triangular part is populated by reflection. One computationally inexpensive scheme to obtain $\tril(C^{-1}L_B)$ creates a partial inverse of $C$ that is also upper-triangular and banded, whose upper bandwidth is necessarily and sufficiently the sum of the upper bandwidth of $C$, the lower bandwidth of $L_B$, and the lower bandwidth of $A$. Another is to solve the banded linear systems for the lower triangular entries of each and every column of $C^{-1}L_B$. This approach may be more stable for larger bandwidth problems.

\subsection{Skew-symmetry}

It is common for a second-order self-adjoint problem to be factored in terms of the product of skew-symmetric differentiation matrices on either side of a positive-definite multiplication operator representing the non-negative variable coefficient. Skew-symmetry of the differentiation matrices then results in semi-discretizations of equations of evolution that are ``stable by design''~\cite{Iserles-Webb-18}. For problems on $L^2([-1,1])$ such as~\cite{Driscoll-Hale-36-108-16}:
\begin{equation}
\DD u = \lambda u,\qquad u(-1) + u(1) = 0,
\end{equation}
the boundary condition is represented by the linear functional on normalized Legendre polynomials:
\begin{equation}
\BB = \begin{pmatrix} \sqrt{2} & 0 & \sqrt{10} & 0 & \sqrt{18} & \cdots\end{pmatrix},
\end{equation}
and our pathological method suggests we expand in the recombined polynomial basis:
\begin{equation}
\{\phi_n\} = \left\{\tilde{P}_0 - \sqrt{\tfrac{1}{5}}\tilde{P}_2, \tilde{P}_1, \tilde{P}_2 - \sqrt{\tfrac{5}{9}}\tilde{P}_4, \tilde{P}_3, \tilde{P}_4 - \sqrt{\tfrac{9}{13}}\tilde{P}_6,\ldots\right\}.
\end{equation}
It is left as an exercise for the reader to confirm the banded skew-symmetry of the resulting differentiation matrix and the banded symmetric positive-definiteness of the accompanying conversion between trial and test functions.

\section{Applications}\label{section:eigenvalueapplications}

The ultraspherical spectral method has been implemented in the software package {\tt ApproxFun.jl}~\cite{ApproxFun}. We have contributed to this package by adding the mechanism which symmetrizes the formally self-adjoint linear differential operators. Below, we show a variety of different applications that may help the reader in determining the applicability of the method. As the symmetrizing mechanism has been implemented in software, our discussion regarding the literal discretizations is rather kurt with the exception of one situation where it is described completely.

Numerically, it is impossible to determine whether or not a linear differential operator and its boundary conditions are formally self-adjoint. In the context of our symmetrizer and due to Eq.~\eqref{eq:PetrovGalerkinisBanded}, the entries of the symmetric-definite discretization are populated in the subdiagonal part of the operator. If the boundary conditions were not self-adjoint, then the entries in superdiagonals above the banded part of the operator would not be relatively near the machine precision. If the operator were not self-adjoint, then the entries above the main diagonal would not be conjugates of the associated entries below the main diagonal. Since linear differential operators may be unbounded, even determining a heuristic tolerance for either one of the conditions of bandedness or symmetry would be challenging. Thus, in contrast with the solution of linear differential equations, there is an additional burden on the user to ensure that her/his eigenproblem is formally self-adjoint.

\subsection{Asymptotics of the spectra of confined quantum anharmonic oscillators}

The Schr\"odinger equation~\cite{Schrodinger-28-1049-26} is a celebrated Sturm--Liouville eigenvalue problem:
\begin{equation}\label{eq:Schrodinger}
\left[-\DD^2 + V(x)\right]u = \lambda u,\qquad \lim_{x\to\pm\infty}u(x) = 0.
\end{equation}
In~\cite{Gaudreau-Slevinsky-Safouhi-337-261-13}, Gaudreau et al.~derive a reversed WKB series for the eigenvalues of the Schr\"odinger equation on the real line with the quartic anharmonic potential $V(x) = \omega x^2 + x^4$ of the form:
\begin{equation}
\sqrt{\lambda_n^{\rm quartic}} \sim \left(\frac{3\Gamma(\nicefrac{3}{4})(n+\nicefrac{1}{2})\sqrt{\pi}}{\Gamma(\nicefrac{1}{4})}\right)^{\nicefrac{2}{3}} + \sum_{k=0}^\infty\frac{F_k(\omega)}{\left[(n+\nicefrac{1}{2})\pi\right]^{\nicefrac{2k}{3}}},\quad{\rm as}\quad n\to\infty,
\end{equation}
where:
\begin{align}
F_0(\omega) & = \frac{\omega \Gamma(\nicefrac{3}{4})^4}{\pi^2},\\
F_1(\omega) & = -\frac{\sqrt[3]{6}\omega^2\left(\pi^4-4\Gamma(\nicefrac{3}{4})^8\right)}{48\pi^3\Gamma(\nicefrac{3}{4})^{\nicefrac{4}{3}}},\\
F_2(\omega) & = \frac{\sqrt[3]{36}\Gamma(\nicefrac{3}{4})^{\nicefrac{4}{3}}\left[(3\pi^4-4\Gamma(\nicefrac{3}{4})^8)\omega^3+12\pi^4\right]}{216\pi^4},
\end{align}
and more terms are available.

When the domain is truncated to $[-10,10]$ and Eq.~\eqref{eq:Schrodinger} is accompanied by Dirichlet boundary conditions, Weyl's law~\cite{Weyl-71-441-12} states that we should anticipate the eigenvalues with absolute value greater than the potential to approach those of a square well potential:
\begin{equation}
\lambda_n^{\rm square~well} \sim \left(\frac{\pi n}{\mu([-10,10])}\right)^2,\quad{\rm as}\quad n\to\infty,
\end{equation}
where $\mu([-10,10]) = 20$ is the Lebesgue measure of the interval.

Additionally, if $\omega\gg1$, we anticipate the smallest eigenvalues to have the asymptotic behaviour of the harmonic oscillator:
\begin{equation}
\lambda_n^{\rm quadratic} \sim \sqrt{\omega}(2n+1).
\end{equation}

With $\omega = 25$, all three of these asymptotic r\'egimes are on display in the left panel of Figure~\ref{fig:AnharmonicAndGeneralized} when viewing the first $10,000$ eigenvalues of Eq.~\eqref{eq:Schrodinger}. To calculate them, we create a $16,667\times16,667$ banded and symmetric-definite discretization, we compute the truncated eigenvalues, and we extract the lowest $60\%$, which we anticipate to be accurate based on the discounting factor of $\frac{\pi}{2}$ in the degree of Chebyshev/Legendre expansions required to resolve plane wavefunctions of the same frequency~\cite{Weideman-Trefethen-25-1279-88}.

\begin{figure}
\begin{center}
\begin{tabular}{cc}
\hspace*{-0.2cm}\includegraphics[width=0.53\textwidth]{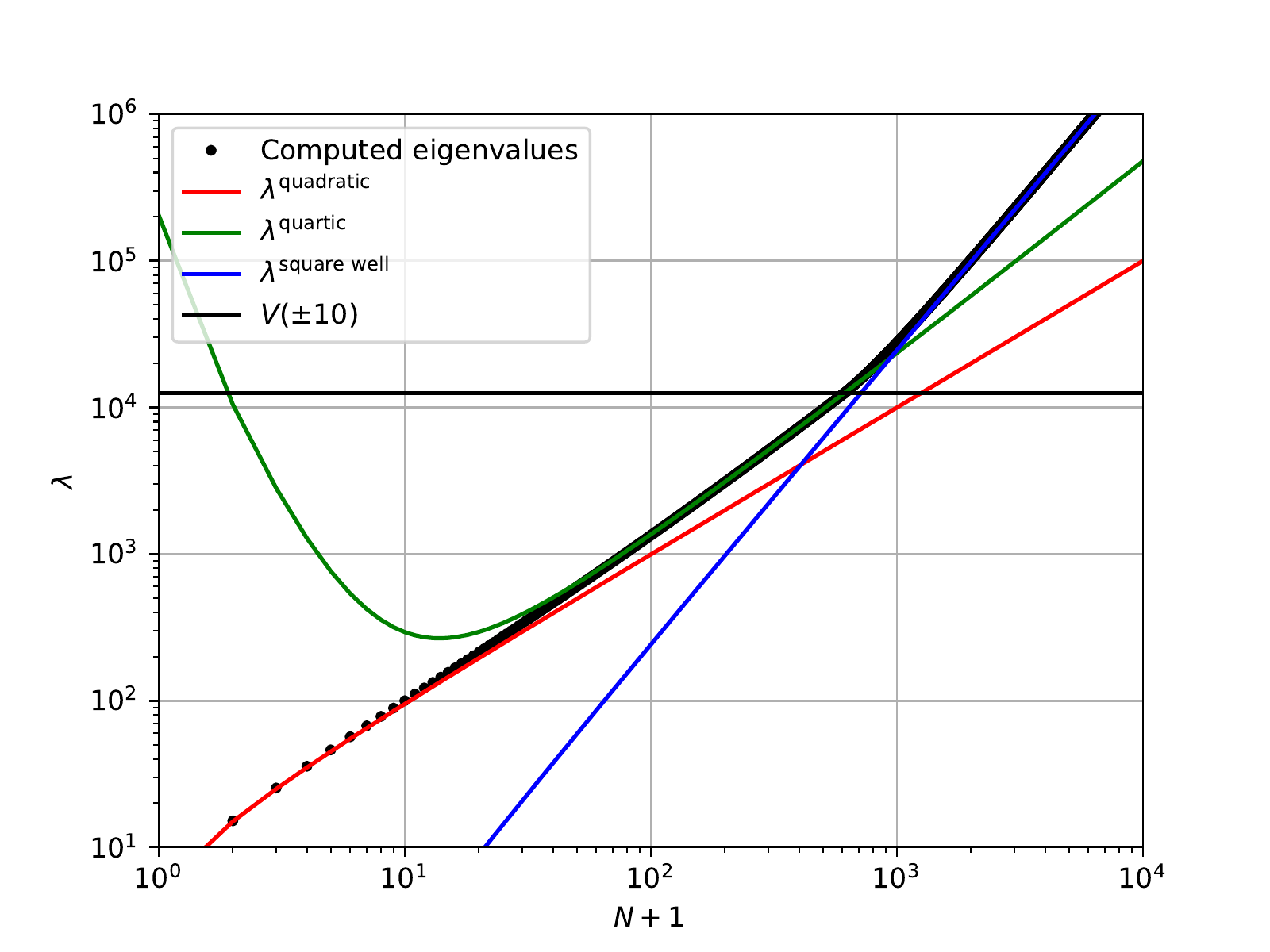}&
\hspace*{-0.65cm}\includegraphics[width=0.53\textwidth]{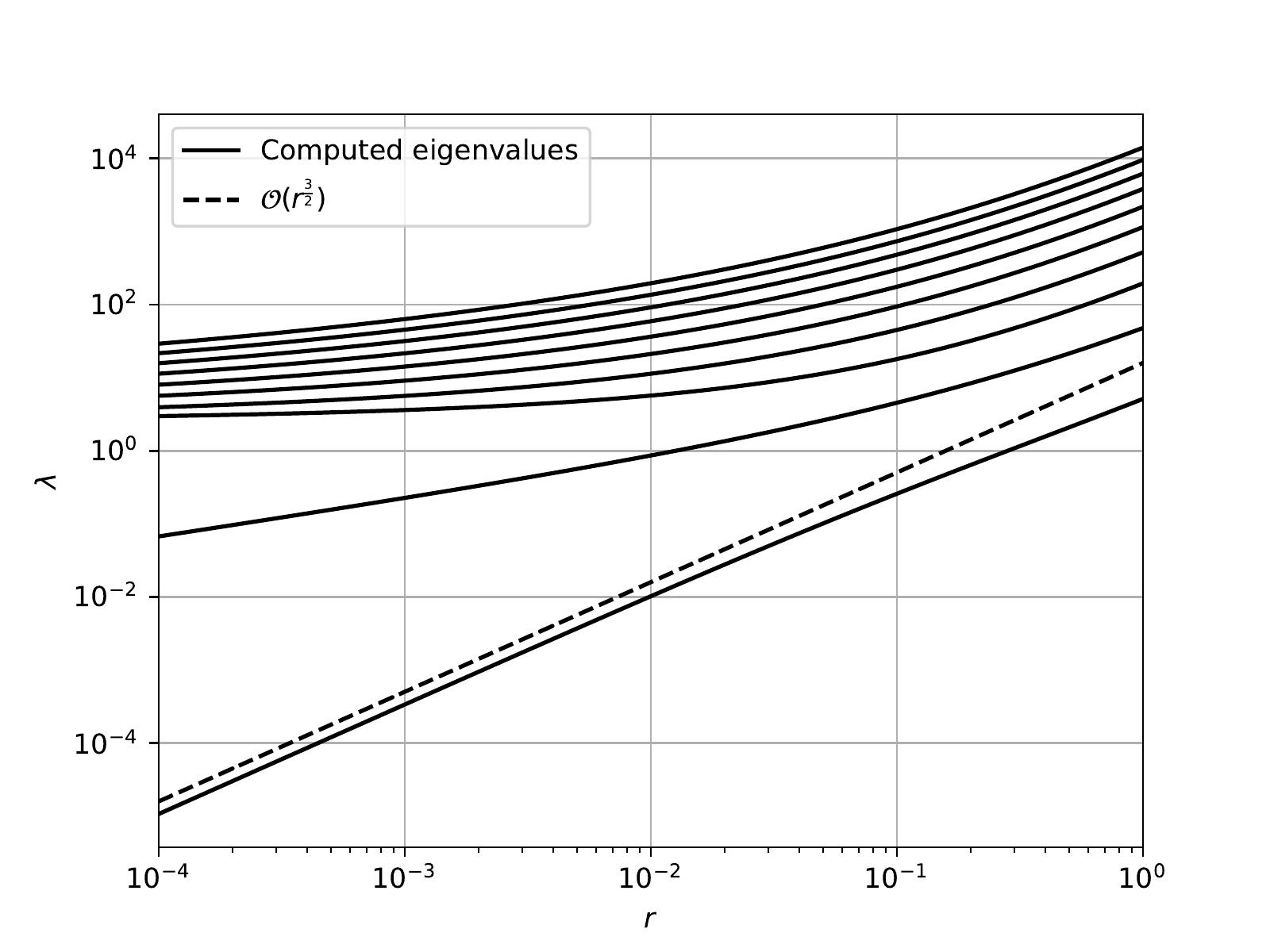}\\
\end{tabular}
\caption{Left: spectrum and asymptotic approximations of the anharmonic oscillator in Eq.~\eqref{eq:Schrodinger}. Right: the first $10$ generalized eigenvalues of Eq.~\eqref{eq:VibrationNonuniformRings} as functions of the thickness parameter $r$.}
\label{fig:AnharmonicAndGeneralized}
\end{center}
\end{figure}

\subsection{Generalized eigenvalue problems}

Our Petrov--Galerkin approach also symmetrizes banded discretizations of generalized eigenvalue problems. The following sixth order generalized eigenvalue problem models the vibration of nonuniform rings~\cite{Gutierrez-Laura-185-507-95}. In self-adjoint form\footnote{Formally self-adjoint problems need not be instantiated in self-adjoint form for the symmetry to transcend the discretization, but a more compact representation may be more efficient.}, the problem reads:
\begin{equation}\label{eq:VibrationNonuniformRings}
\left\{(-\DD)^3\left[\phi\DD^3\right] - (-\DD)^2\left[2\pi^2\phi\DD^2\right] + (-\DD)\left[\left(\pi^4\phi+\pi^2\phi''\right)\DD\right]\right\} u = \lambda\left\{(-\DD)\left[\pi^4f\DD\right] + \pi^6 f\right\}u,
\end{equation}
where $f(x) = -4(r-1)x^2+4(r-1)x+1$, for a thickness parameter $0<r<\infty$, $\phi = f^3$, and the boundary conditions are $u = u' = u''' = 0$ at both $x=0$ and $x=1$.

With sixth order derivatives in the first operator in Eq.~\eqref{eq:VibrationNonuniformRings}, this is by all accounts an ill-conditioned generalized eigenvalue problem. And as $r\searrow0$, this ill-conditioning is compounded by the fact that both linear operators become singular as the smallest generalized eigenvalue tends to zero. This is explained by the fact that the variable coefficients:
\begin{equation}
f(x) = 1+4(1-r)\left[(x-\nicefrac{1}{2})^2-\nicefrac{1}{4}\right],
\end{equation}
and $\phi$ have multiple roots at the midpoint of the computational domain when $r=0$, and some of these roots would split apart were $r$ permitted to cross the physical threshold and become negative.

For constant coefficients ($r=1$), Driscoll~\cite{Driscoll-229-5980-10} accurately computes the first four eigenvalues and eigenfunctions in $64$-bit floating-point arithmetic, reporting the square roots of the eigenvalues as the vibration frequencies. The results are included here in Table~\ref{table:SixthOrderDriscoll}.

\begin{table}[hbtp]
\caption{Driscoll's~\cite{Driscoll-229-5980-10} first four computed generalized eigenvalues of Eq.~\eqref{eq:VibrationNonuniformRings} with constant coefficients.}
\begin{center}
\begin{tabular}{c r @{.} l r @{.} l}
\sphline
$n$ & \multicolumn{2}{c}{$\sqrt{\lambda_n^{\rm Driscoll}}$} & \multicolumn{2}{c}{$\lambda_n^{\rm Driscoll}$} \\
\sphline
$0$ & 2&266~742~077 & 5&138~119~644\\
$1$ & 6&923~297~244 & 47&932~044~73\\
$2$ & 13&977~668~83 & 195&375~225~9\\
$3$ & 22&819~562~61 & 520&732~437~7\\
\sphline
\end{tabular}
\end{center}
\label{table:SixthOrderDriscoll}
\end{table}%

Using our symmetric-definite banded discretization, we compute the generalized eigenvalues in 256-bit extended precision. The banded sparsity is crucial in extended precision and for ill-conditioned problems such as this one because every flop is so much more costly. Using the general Rayleigh quotient iteration~\cite{Rayleigh-37,Parlett-28-679-74} for the pencil $(A,B)$:
\begin{equation}\label{eq:GRQI}
\lambda^{(k)} = \frac{\langle v^{(k)}, A v^{(k)}\rangle}{\langle v^{(k)}, B v^{(k)}\rangle},\quad\hbox{followed by}\quad v^{(k+1)} = \frac{(A-\lambda^{(k)}B)^{-1} v^{(k)}}{\norm{(A-\lambda^{(k)}B)^{-1} v^{(k)}}_2},
\end{equation}
our extended precision results in Table~\ref{table:SixthOrderEP} agree with every digit in Driscoll's (correctly rounded) computations.

\begin{table}[hbtp]
\caption{The first four computed eigenvalues of Eq.~\eqref{eq:VibrationNonuniformRings} in $256$-bit extended precision.}
\begin{center}
\begin{tabular}{c r @{.} l}
\sphline
$n$ & \multicolumn{2}{c}{$\lambda_n$} \\
\sphline
$0$ & 5&138~119~642~550~555~649~415~412~286~723~460~005~700~130~780~549~503~937~354~845~212~533~811~\ldots\\
$1$ & 47&932~044~726~525~070~683~741~246~754~121~353~113~209~603~021~890~092~332~934~543~893~845~719~\ldots\\
$2$ & 195&375~225~987~311~298~618~562~101~588~390~534~216~626~466~473~664~548~029~401~788~843~545~607~\ldots\\
$3$ & 520&732~437~610~822~993~012~167~361~990~447~367~561~584~023~509~960~784~102~391~822~269~243~419~\ldots\\
\sphline
\end{tabular}
\end{center}
\label{table:SixthOrderEP}
\end{table}%

The right panel of Figure~\ref{fig:AnharmonicAndGeneralized} plots the first $10$ generalized eigenvalues as functions of the thickness parameter $r$ computed by creating $1,600\times1,600$ discretizations in extended precision and using the general Rayleigh quotient to iteratively refine approximations based on the spectral data at a slightly larger thickness parameter; that is, a homotopy marches across the figure from right to left. A banded $LDL^\top$ factorization is used to solve the linear systems in Eq.~\eqref{eq:GRQI} in linear complexity in the discretization dimension. The discrete generalized eigenvalues are compared with those corresponding to $3,200\times3,200$ discretizations and the pointwise relative difference on the left side of the figure is less than $3\times 10^{-8}$, sufficient for plotting purposes. We remark that some of the first $10$ generalized eigenvalues of the $800\times800$ discretization agree with the $1,600\times1,600$ only to two digits, a testament to the severe ill-conditioning of this problem. Based on our numerical experiments, we conjecture that the smallest generalized eigenvalue scales with the power law:
\begin{equation}
\lambda_0(r) = \OO(r^{\frac{3}{2}})\quad{\rm as}\quad r\searrow0.
\end{equation}

\subsection{Piecewise-defined bases}

Eigenfunctions need not live in an unweighted Hilbert space. On the ray $D = [-1,\infty)$, we consider the Sturm--Liouville eigenvalue problem:
\begin{equation}\label{eq:WeightedSchrodinger}
\left[-\DD^2 + V(x)\right]u = \lambda(1+\abs{x})u,\qquad u(-1) = \lim_{x\to\infty}u(x) = 0.
\end{equation}
with the piecewise-defined function:
\begin{equation}
V(x) = \left\{\begin{array}{ccr} -1 & \for & -1\le x < 0,\\ x^2 & \for & 0\le x < \infty.\end{array}\right.
\end{equation}
Formally, $\frac{1}{1+\abs{x}}\left[-\DD^2 + V(x)\right]$ is a self-adjoint operator on $L^2([-1,\infty), (1+\abs{x})\ud x)$, and we could identify an orthonormal basis that spans this Hilbert space. However, division by $1+\abs{x}$ is only approximately representable as a banded linear operator on weighted polynomial bases, resulting in an undue complication in the discretization. Instead, we view both sides of Eq.~\eqref{eq:WeightedSchrodinger} as self-adjoint operators on $L^2([-1,\infty),\ud x)$ and we utilize the following piecewise-defined bases as the starting point for a banded symmetric-definite discretization:
\begin{equation}
\phi_n^L(x) = \sqrt{2}\tilde{P}_n(2x+1)\rchi_{[-1,0]}(x),\quad{\rm and}\quad \phi_n^R(x) = e^{-x/2}\tilde{L}_n(x)\rchi_{[0,\infty)}(x),
\end{equation}
where $\tilde{L}_n(x) \equiv L_n(x)$ are the orthonormalized Laguerre polynomials that happen to coincide with the standard normalization~\cite[\S 18.3]{Olver-et-al-NIST-10}, and $\rchi_I(x)$ is the characteristic function on the set $I$.

Notice that by incorporating half the full Laguerre weight, $e^{-x}$, the orthonormal basis $\{\phi_n^R\}$ already satisfies the boundary condition at infinity. However, by interlacing the left and right bases:
\begin{equation}
\{\phi_n\} = \{ \phi_0^L, \phi_0^R, \phi_1^L, \phi_1^R, \ldots \},
\end{equation}
we must also enforce two additional continuity conditions at the junction, namely $u(0^-) - u(0^+) = u'(0^-) - u'(0^+) = 0$, for a grand total of three infinite-dimensional linear algebraic conditions in $\BB$.

In this problem, an exponentially small error would be retained if the domain were truncated to a reasonable size, such as $[-1,50]$, and two piecewise-defined scaled and shifted Legendre polynomial bases were elected to span $L^2([-1,50], \ud x)$. However, recombinations of weighted Laguerre polynomial expansions allow one to recover high relative accuracy far along the tails of the eigenfunctions that may be utilized in sensitive subsequent calculations, such as the computation of conditional expectations.

Figure~\ref{fig:BandedSparsity} shows the banded sparsity patterns in the symmetric-definite discretizations of all three problems in this section.

\begin{figure}
\begin{center}
\hspace*{-0.2cm}\includegraphics[width=\textwidth]{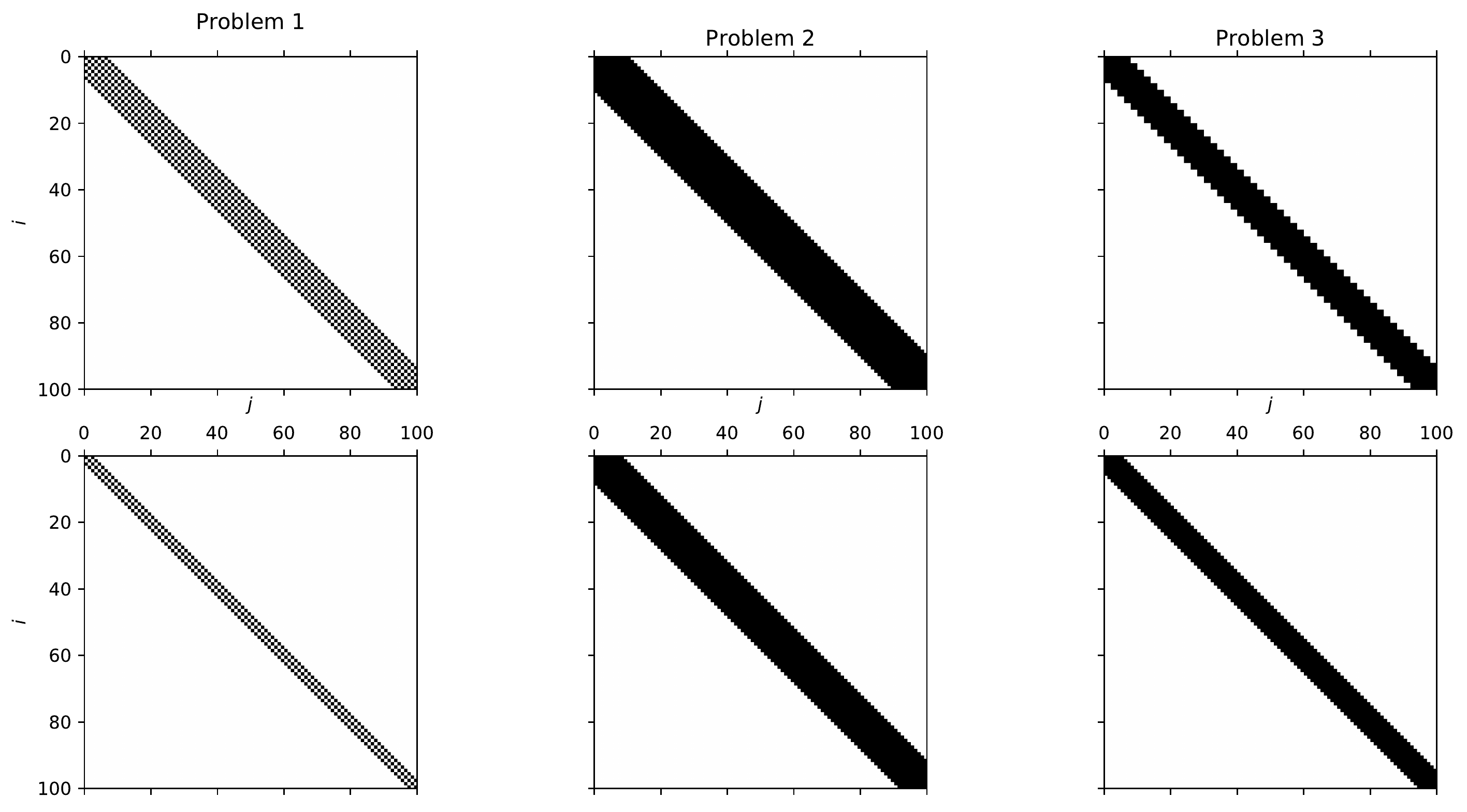}
\caption{Banded sparsity patterns of the symmetric-definite discretizations of the three problems in this section. The top row are the left-hand side operators and the bottom row represent the right-hand side operators. For the second problem, we illustrate the case $r=\nicefrac{1}{2}$ as the bandwidth when $r=1$ is exceptionally small.}
\label{fig:BandedSparsity}
\end{center}
\end{figure}

\subsection{Spectral analysis of the model problem}\label{subsection:spectralanalysis}

In this section, we consider a spectral analysis of the negative Laplacian on $[-1,1]$ with Dirichlet boundary conditions, as discussed in \S~\ref{section:modelproblem}. Our spectral discretization results in the generalized symmetric-definite pencil $(D,M)$, where:
\begin{equation}\label{eq:DMpencil}
D_{n,n} = (n+1)(n+2),\qquad M_{n,n} = \frac{2(n+1)(n+2)}{(2n+1)(2n+5)},\quad{\rm and}\quad M_{n,n+2} = M_{n+2,n} = -\sqrt{\frac{(n+1)(n+2)(n+3)(n+4)}{(2n+3)(2n+5)^2(2n+7)}},
\end{equation}
and all other entries are zero. By the type of pencil, all generalized eigenvalues are real. Since $M$ is irreducible, all generalized eigenvalues are distinct. The remaining challenge is to obtain individual bounds.

A first idea is to employ Gerschgorin's theorem, either on the pencil~\cite{Nakatsukasa-80-2127-11} or after congruence transformation to the standard eigenproblem $D^{-\frac{1}{2}}MD^{-\frac{1}{2}}$. Since $D$ is diagonal and positive, its square-root is real and agrees with the component-wise definition. However, most of the resulting Gerschgorin disks are not disjoint and so the approach fails to offer reasonable bounds. Furthermore, it is readily confirmed that diagonal similarity transformations cannot isolate each and every disk~\cite{Henrici-11-281-63,Porsching-8-437-66}.

We thus continue with a different approach based on Sylvester's law of inertia, which has been explored and extended to pencils by Nakatsukasa and Noferini~\cite{Nakatsukasa-Noferini-1711-00495}. Given a shift $\sigma\in\R$, one may determine how many generalized eigenvalues of $(D,M)$ are larger than, equal to, or smaller than $\sigma$ by examining the inertia of the matrix $D-\sigma M$; they are equal. From Eq.~\eqref{eq:DMpencil}, the left boundary of the $n^{\rm th}$ Gerschgorin disk of $D-\sigma M$ is:
\begin{equation}
\ell_n = (n+1)(n+2) - \sigma\left(\frac{2(n+1)(n+2)}{(2n+1)(2n+5)} + \sqrt{\frac{(n+1)(n+2)(n+3)(n+4)}{(2n+3)(2n+5)^2(2n+7)}} + \sqrt{\frac{(n-1)n(n+1)(n+2)}{(2n-1)(2n+1)^2(2n+3)}}\right).
\end{equation}
Since the term in parentheses is bounded above by $1$ provided that $n>0$, the left boundary satisfies:
\begin{equation}
\ell_n > (n+1)(n+2) - \sigma.
\end{equation}
It follows that if the mild shift $\sigma_n = (n+1)(n+2)$ is chosen, then the trailing infinite section $(D-\sigma M)_{n+1:\infty,n+1:\infty}$ is positive-definite, and hence $\lambda_n(D,M) > (n+1)(n+2)$. For an upper bound, by the Courant--Fisher minimax theorem~\cite[Theorem 8.1.2]{Golub-Van-Loan-13}, it suffices to consider the $(n+1)\times (n+1)$ principal finite sections of $D$ and $M$. Then:
\begin{equation}
\lambda_n(D,M) \le \norm{(M^{-1}D)_{0:n,0:n}}_2 \le \norm{(M^{-1})_{0:n,0:n}}_2\norm{D_{0:n,0:n}}_2.
\end{equation}
For estimates of the induced $2$-norms, we find the lower bound on the smallest Gerschgorin disk for $M_{0:n,0:n}$:
\begin{equation}
\frac{2(n+1)(n+2)}{(2n+1)(2n+5)} - \sqrt{\frac{(n+1)(n+2)(n+3)(n+4)}{(2n+3)(2n+5)^2(2n+7)}} - \sqrt{\frac{(n-1)n(n+1)(n+2)}{(2n-1)(2n+1)^2(2n+3)}} > \frac{1}{2(n+1)(n+2)},
\end{equation}
and since $D$ is diagonal, $\norm{D_{0:n,0:n}}_2 = (n+1)(n+2)$ so that:
\begin{equation}
\lambda_n(D,M) < 2\left[(n+1)(n+2)\right]^2.
\end{equation}
Finally, we note that the inequality $2 < \lambda_0(D,M) < 8$ also holds since the $0^{\rm th}$ Gerschgorin disk for $(D,M)$ is isolated.

The analysis of the eigenvalues of the negative Laplacian are important for understanding the stability of the choice of spatial discretizations when marching forward in time, e.g.~when solving the heat equation. It is also important as a model problem because all the eigenvalues are known analytically. For $n\times n$ pseudospectral discretizations in Chebyshev points, Gottlieb and Lustman~\cite{Gottlieb-Lustman-20-909-83} proved that the spectra are real, positive, and distinct, Charalambides and Waleffe~\cite{Charalambides-Waleffe-46-280-08} proved that the spectra of sequential discretizations interlace, and Weideman and Trefethen~\cite{Weideman-Trefethen-25-1279-88} proved an asymptotically constant lower-bound and quartic upper-bound for all of them. These results are impressive for a dense asymmetric matrix. But in the limit of a large number of collocation points, do they guarantee that the pseudospectral discretization has discrete spectrum that accumulates at infinity? Has continuous spectrum been ruled out? The preceding analysis of our symmetric-definite and banded discretization reveals independent bounds for each eigenvalue such that our discretization is faithful to the continuous operator all the way to infinity.

More generally, we argue that banded sparsity promotes a deeper, more penetrating, analysis that is equally applicable to the other problems in this section; please see Figure~\ref{fig:BandedSparsity}. While the analysis may be tedious, the sparsity in our discretization renders it in principle much simpler than many alternative (dense) discretizations.

\begin{remark}
The diagonal entries, $d_n$, of $D$ in an $LDL^\top$ factorization of $D-\sigma M$ are given recursively by:
\begin{equation}
d_0 = D_{0,0} - \sigma M_{0,0},\quad d_1 = D_{1,1} - \sigma M_{1,1},\quad d_{n+2} = D_{n+2,n+2}-\sigma M_{n+2,n+2} - \frac{(\sigma M_{n+2,n})^2}{d_n}.
\end{equation}
Properties of the inertia of $D-\sigma M$ are obscured by the nonlinearity in this recurrence relation, but we should not preclude tighter bounds from being determined by this avenue.
\end{remark}

\section{Symmetric-(definite) banded eigensolvers}

So far, our applications have only considered eigenvalue computations. From here to the end, we consider algorithms for the spectral decomposition of self-adjoint problems.

In this section and the next, we break with the notation already in use. In particular, regular capital letters represent finite-dimensional matrices, calligraphic letters represent infinite-dimensional matrices (rather than continuous operators), and indexing begins at $1,1$ rather than $0,0$. We also use shorthand notation of $a:b$ to represent all entries of an array from index $a$ to $b$ inclusive, and this is sometimes tensorized. Other notational shorthands attempt to keep the presentation light.

\begin{proposition}\label{proposition:SB_spectral}
Let $A\in\R^{k\times k}$ be a symmetric banded matrix with bandwidth $b>0$ such that $A_{i,j} = 0$ for $\abs{i-j} > b$. The algorithms of~\cite{Schwarz-12-231-68,Kaufman-26-551-00} compute the spectral decomposition:
\begin{equation}
A = Q\Lambda Q^\top,
\end{equation}
in $\OO(k^2b)$ flops. The eigenvectors $Q$ are products of Givens rotations and the eigenvectors of the similar symmetric tridiagonal problem, and the dense matrices may be formed in $\OO(k^3)$ flops if required.
\end{proposition}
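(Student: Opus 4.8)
\emph{Proof plan.} The proposition recalls the classical band-reduction pipeline, so the plan is to recall its two phases and account for the flops in each; correctness then follows from the invariance of the spectrum and of symmetry under orthogonal similarity. \textbf{Phase 1 (band to tridiagonal).} Following Schwarz~\cite{Schwarz-12-231-68}, I would annihilate the off-tridiagonal bands of $A$ one subdiagonal at a time, from the outermost ($\abs{i-j}=b$) inward. A single entry on the current outermost band is zeroed by a Givens rotation applied as a two-sided similarity $A\mapsto G^\top A G$; since $G$ acts in the plane of two adjacent coordinates, it preserves symmetry and creates exactly one fill-in (a ``bulge'') just outside the current band. That bulge is then chased toward the lower-right corner by a short sequence of further Givens similarities, each touching only $\OO(b)$ entries and displacing the bulge by a fixed amount, so that clearing one band entry costs $\OO(k)$ flops in all. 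With $\OO(k)$ entries on each of $\OO(b)$ bands, Phase~1 costs $\OO(k^2 b)$ flops. Collecting the rotations as $\GG=G_1 G_2\cdots G_m$, the matrix $T=\GG^\top A\GG$ is symmetric tridiagonal and shares the spectrum of $A$; Kaufman~\cite{Kaufman-26-551-00} regroups the same rotations into blocks for data locality, leaving the asymptotics unchanged.

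\textbf{Phase 2 (tridiagonal diagonalization).} Solve $T = V\Lambda V^\top$ with a symmetric tridiagonal eigensolver that returns all eigenpairs in $\OO(k^2)$ flops --- for instance bisection together with inverse iteration, or multiple relatively robust representations (see~\cite{Golub-Van-Loan-13}); since $b\ge 1$ this is absorbed into the $\OO(k^2 b)$ budget. One should note that a naive $QR$ iteration accumulating eigenvectors would cost $\OO(k^3)$ and would spoil the bound when $b$ is small, so a genuinely $\OO(k^2)$ solver is needed here. Setting $Q:=\GG V$ yields $A = Q\Lambda Q^\top$. Kept in factored form --- the list of Phase-1 Givens rotations together with the dense eigenvector matrix $V$ --- the factor $Q$ needs only $\OO(k^2)$ storage and is produced within budget; if an explicit dense $Q$ (or $\GG$) is wanted, one applies the accumulated rotations to a dense matrix at $\OO(k)$ each, for $\OO(k^3)$ flops overall, the count recorded in both references. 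This establishes every clause.

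\textbf{Where the work is.} The only step that is not pure bookkeeping is the flop accounting of the bulge chase in Phase~1: one must verify that the fill-in from each annihilating rotation lands in a single predictable position and that chasing it off the corner never re-pollutes a band already cleared, so that the cost per annihilated entry really is $\OO(k)$ and no larger. Everything else --- spectral invariance under orthogonal similarity, preservation of symmetry by two-sided plane rotations, and the elementary cost of a plane rotation on a banded or dense matrix --- is routine, and as a sanity check, when $b$ is comparable to $k$ the estimate $\OO(k^2 b)$ collapses to the familiar $\OO(k^3)$ of dense symmetric eigensolvers, as it should.
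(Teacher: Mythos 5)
Your proposal is correct and follows essentially the same route as the paper, which simply cites the Schwarz--Kaufman band-to-tridiagonal reduction by bandwidth-preserving Givens similarities with bulge chasing, followed by an $\OO(k^2)$ symmetric tridiagonal eigensolver, with dense eigenvector matrices accumulated in $\OO(k^3)$ flops only if explicitly required. Your added bookkeeping of the per-entry $\OO(k)$ chase cost and the caveat about needing a genuinely $\OO(k^2)$ tridiagonal solver are consistent with, and slightly more detailed than, the paper's discussion.
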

The algorithms reduce the symmetric banded problem to symmetric tridiagonal form and introduce orthogonal similarity transformations to preserve the bandwidth of the problem, chasing bulges beyond the bandwidth off the matrix. These algorithms are backward stable, and Kaufman improves the efficiency by demonstrating that multiple orthogonal similarity transformations may be vectorized. The symmetric tridiagonal spectral decomposition may be produced using one of many alternatives in $\OO(k^2)$ flops~\cite{Golub-Van-Loan-13}.

\begin{proposition}\label{proposition:SB_generalized_spectral}
Let $A\in\R^{k\times k}$ and $B\in\R^{k\times k}$ be symmetric banded matrices both bandlimited by $b>0$, and let $B$ be positive-definite. The algorithms of~\cite{Crawford-16-41-73,Wilkinson-Jacobs-77,Kaufman-14-372-93} compute the generalized spectral decomposition:
\begin{equation}
V^\top A V = \Lambda,
\end{equation}
in $\OO(k^2b)$ flops. In this case, the eigenvectors are $B$-orthogonal, satisfying $V^\top B V = I$.
\end{proposition}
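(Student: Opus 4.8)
The plan is to reduce the generalized problem to a standard symmetric banded eigenproblem by a banded congruence transformation and then invoke Proposition~\ref{proposition:SB_spectral}. First I would compute the Cholesky factorization $B = LL^\top$, where $L$ is lower triangular with the same bandwidth $b$ as $B$; the banded Cholesky algorithm costs $\OO(kb^2)$ flops and preserves the bandwidth because no fill-in occurs outside the band. Setting $C = L^{-1}AL^{-\top}$, the pencil $(A,B)$ is congruent to $(C,I)$: if $C = Q\Lambda Q^\top$ is the spectral decomposition of the symmetric matrix $C$, then $V := L^{-\top}Q$ satisfies $V^\top A V = Q^\top C Q = \Lambda$ and $V^\top B V = Q^\top L^{-1}(LL^\top)L^{-\top}Q = Q^\top Q = I$, which is the claimed $B$-orthogonality.

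The subtlety — and the reason the cited algorithms are nontrivial — is that forming $C$ naively destroys the band: $L^{-1}$ is generally full triangular, so $C$ would be dense and the subsequent tridiagonalization would cost $\OO(k^3)$. The key step, due to Crawford~\cite{Crawford-16-41-73}, is to interleave the congruence by $L^{-1}$ (applied a few rows and columns at a time) with bulge-chasing Givens rotations that immediately annihilate any entries pushed outside the band, exactly as in the reduction underlying Proposition~\ref{proposition:SB_spectral}. Each elementary congruence by a sparse factor of $L$ creates only a bulge whose size depends on $b$ alone, and each bulge is chased off the end of the matrix in $\OO(k/b)$ rotations of cost $\OO(b)$; hence the whole reduction to standard banded form, and then to tridiagonal form, runs in $\OO(k^2 b)$ flops. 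I would cite the detailed operation counts from~\cite{Crawford-16-41-73,Wilkinson-Jacobs-77} and Kaufman's vectorized variant~\cite{Kaufman-14-372-93}.

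Having obtained a symmetric banded $C$ of bandwidth $b$ at cost $\OO(k^2 b)$, I would apply Proposition~\ref{proposition:SB_spectral} to it, giving $C = Q\Lambda Q^\top$ in a further $\OO(k^2 b)$ flops with $Q$ stored implicitly as a product of Givens rotations and the orthogonal factor of a tridiagonal eigensolver. The eigenvector matrix $V = L^{-\top}Q$ is then available in factored form; if the dense $V$ is required, a banded backsolve against $L^\top$ followed by accumulation of $Q$ costs $\OO(k^3)$. Backward stability follows from that of banded Cholesky, of the orthogonal bulge-chasing congruence, and of the symmetric banded eigensolver; the conditioning of $B$ enters only through $\kappa(L) = \sqrt{\kappa(B)}$, which already governs the sensitivity of the pencil. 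The hardest part of a fully self-contained proof would be the index bookkeeping that verifies Crawford's bulge-chasing never lets the bandwidth exceed $b$ and keeps every bulge a bounded distance from the band — routine but tedious, and carried out in full in the cited references.
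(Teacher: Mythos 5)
Your proposal is correct and follows essentially the same route as the paper: reduce the pencil to a standard symmetric banded eigenproblem by a Cholesky-based congruence interleaved with band-preserving orthogonal transformations (Crawford/Wilkinson/Kaufman), invoke Proposition~\ref{proposition:SB_spectral}, recover $V$ from the triangular factor and the accumulated orthogonal transformations, and note that stability hinges on the conditioning of the Cholesky factors. The only cosmetic difference is that the paper describes the LAPACK split-Cholesky variant $B = S^\top S$, whereas you use Crawford's original $B = LL^\top$, a distinction the paper itself points out.
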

In LAPACK, this algorithm uses the split-Cholesky factorization of $B = S^\top S$ to convert the symmetric-definite banded generalized eigenvalue problem into the regular symmetric banded eigenvalue problem:
\begin{equation}
X^\top A X = C,\qquad X^\top B X = I.
\end{equation}
The matrix $X := S^{-1}Q$ consists of the inverse of one of the split-Cholesky factors combined with orthogonal transformations that are designed to ensure that $C$ is banded. This improvement due to Wilkinson~\cite{Wilkinson-Jacobs-77} is based on Crawford's algorithm~\cite{Crawford-16-41-73} which originally uses a Cholesky factorization of $B = LL^\top$.

Afterward, the symmetric banded spectral decomposition of $C$ follows from Proposition~\ref{proposition:SB_spectral}:
\begin{equation}
C = W\Lambda W^\top.
\end{equation}
In terms of $S$, $Q$, and $W$, the generalized eigenvectors are given by the product $V = S^{-1}QW$.

The stability of the algorithms is related to the norms of the split-Cholesky factors. Thus, ill-conditioning in $B$ may obscure the result.

\section{An adaptive spectral decomposition}

Several authors have recently investigated algorithms for infinite-dimensional numerical linear algebra, including an adaptive $QR$ decomposition by Olver and Townsend~\cite{Olver-Townsend-55-462-13}, algorithms for the spectra of finite-rank perturbations of Toeplitz symmetric tridiagonal Jacobi operators by Webb~\cite{Webb-Thesis-17}, and related algorithms investigated by Hansen~\cite{Hansen-254-2092-08} and Colbrook and Hansen~\cite{Colbrook-Hansen-IQR-18}. In infinite dimensions, similarity and congruence transformations may begin ``at infinity'' and terminate at the base of the operator, or they may begin at the base, iteratively working on finite-sections of the operator. In our adaptive spectral decomposition below, we espouse the latter perspective.

\subsection{An adaptive spectral decomposition}

In this section, we describe an algorithm for the partial spectral decomposition of:
\begin{equation}
\LL u = \lambda u,
\end{equation}
with bandwidth $b$.

\begin{algorithm}\label{algorithm:SB_AED}
Initialize $k := 2b$. Partition $\LL$ into:
\begin{equation}
\LL = \begin{pmatrix}
\LL_{1:k,1:k} & \LL_{k+1:\infty,1:k}^\top\\
\LL_{k+1:\infty,1:k} & \LL_{k+1:\infty,k+1:\infty}\\
\end{pmatrix}
=: \begin{pmatrix}
L_k & L_b^\top\\
L_b & \LL_{k+1:\infty}\\
\end{pmatrix}
\end{equation}
In $\OO(k^2b)$ operations, compute the symmetric banded spectral decomposition:
\begin{equation}
L_k = Q\Lambda Q^\top,
\end{equation}
and use the orthogonal matrix $Q$ to rewrite the partition as a similarity transformation on the {\em operator}:
\begin{equation}\label{eq:similarL}
\LL = \begin{pmatrix} Q &\\ & \II \end{pmatrix}
\begin{pmatrix} \Lambda & Q^\top L_b^\top\\
L_bQ & \LL_{k+1:\infty}\\
\end{pmatrix}
\begin{pmatrix} Q^\top &\\ & \II \end{pmatrix}.
\end{equation}
Define $M := L_bQ$ and set $j$ to be the largest integer multiple of $b$ that satisfies the inequality:
\begin{equation}\label{eq:SB_AED_deflationcriterion}
\norm{M_{1:b,1:j}}_F^2 \le {\tt tol} \left( \abs{\lambda_{\rm max}(\Lambda_{1:j,1:j})} + \abs{\lambda_{\rm min}(\Lambda_{1:j,1:j})} \right) \left( \abs{\lambda_{\rm max}(\Lambda_{1:j,1:j})} + \abs{\lambda_{\rm max}(\Lambda)} \right),
\end{equation}
for some tolerance ${\tt tol} > 0$. For eigenvalues only, ${\tt tol} = \epsilon$ and for both eigenvalues and eigenfunctions, ${\tt tol = \epsilon^2}$. If there is no integer $j$ that satisfies the inequality, then double $k$ and reiterate until deflation sets in.

After determining that $j$ eigenvalues are to be deflated, we still have the operator partitioned as above. Since $M$ is dense, the structure of the similar operator is now:
\begin{equation}\label{eq:firstpartition}
\begin{pmatrix} \Lambda & M^\top\\
M & \LL_{k+1:\infty}\\
\end{pmatrix}
=
\arraycolsep=1.4pt
\begin{pmatrix} \mydiagdown & & & & & & \square & &\\
& \mydiagdown & & & & & \square & &\\
& & \mydiagdown & & & & \square & &\\
& & & \mydiagdown & & & \square & &\\
& & & & \myddots & & \myvdots & &\\
& & & & & \mydiagdown & \square & &\\
\square & \square & \square & \square & \mycdots & \square & \square & \lltriangle &\\
& & & & & & \urtriangle & \square & \myddots\\
& & & & & & & \myddots & \myddots\\
\end{pmatrix}
\end{equation}
We perturb the operator by setting $M_{1:b,1:j} \leftarrow 0$. Thus:
\begin{equation}
\begin{pmatrix} \Lambda & M^\top\\
M & \LL_{k+1:\infty}\\
\end{pmatrix}
\approx
\arraycolsep=1.4pt
\begin{pmatrix} \mydiagdown & & & & & & 0 & &\\
& \myddots & & & & & \myvdots & &\\
& & \mydiagdown & & & & 0 & &\\
& & & \mydiagdown & & & \square & &\\
& & & & \myddots & & \myvdots & &\\
& & & & & \mydiagdown & \square & &\\
0 & \mycdots & 0 & \square & \mycdots & \square & \square & \lltriangle &\\
& & & & & & \urtriangle & \square & \myddots\\
& & & & & & & \myddots & \myddots\\
\end{pmatrix}
\end{equation}
It is now clear that the first $j$ eigenvalues of the perturbed operator may be deflated. Furthermore, the remaining part of the similar operator is of the same form:
\begin{equation}
\arraycolsep=1.4pt
\begin{pmatrix} \mydiagdown & & & & & & \square & &\\
& \mydiagdown & & & & & \square & &\\
& & \mydiagdown & & & & \square & &\\
& & & \mydiagdown & & & \square & &\\
& & & & \myddots & & \myvdots & &\\
& & & & & \mydiagdown & \square & &\\
\square & \square & \square & \square & \mycdots & \square & \square & \lltriangle &\\
& & & & & & \urtriangle & \square & \myddots\\
& & & & & & & \myddots & \myddots\\
\end{pmatrix}
\end{equation}
We now perform orthogonal similarity transformations to return the operator to symmetric banded form. Below, we describe the effect of applying Householder reflectors~\cite[\S 5.1]{Golub-Van-Loan-13} from the left. The analogous Householder reflectors are also applied from the right, and the figures show the effect of both as similarity transformations. To simplify the exposition, let $\ell := k-j$.

\begin{enumerate}
\item Begin by applying Householder reflectors to lower triangularize the entries in rows $1:b$ and columns $\ell+1:\ell+b$, resulting in the similar operator:
\begin{equation}
\arraycolsep=1.4pt
\begin{pmatrix} \blacksquare & & & & & & \llblacktriangle & &\\
& \mydiagdown & & & & & \square & &\\
& & \mydiagdown & & & & \square & &\\
& & & \mydiagdown & & & \square & &\\
& & & & \myddots & & \myvdots & &\\
& & & & & \mydiagdown & \square & &\\
\urblacktriangle & \square & \square & \square & \mycdots & \square & \square & \lltriangle &\\
& & & & & & \urtriangle & \square & \myddots\\
& & & & & & & \myddots & \myddots\\
\end{pmatrix}
\end{equation}
Black symbols represent the cumulative modification of nonzero submatrices.
\item Next, apply Householder reflectors to lower triangularize the entries in rows $1:2b$ and columns $\ell+1:\ell+b$:
\begin{equation}
\arraycolsep=1.4pt
\begin{pmatrix} \blacksquare & \blacksquare & & & & & & &\\
\blacksquare & \blacksquare & & & & & \llblacktriangle & &\\
& & \mydiagdown & & & & \square & &\\
& & & \mydiagdown & & & \square & &\\
& & & & \myddots & & \myvdots & &\\
& & & & & \mydiagdown & \square & &\\
& \urblacktriangle & \square & \square & \mycdots & \square & \square & \lltriangle &\\
& & & & & & \urtriangle & \square & \myddots\\
& & & & & & & \myddots & \myddots\\
\end{pmatrix}
\end{equation}
\item Before continuing to annihilate entries in the $\ell+1:\ell+b$ columns, apply Householder reflectors to lower triangularize the entries in rows $1:b$ and columns $b+1:2b$:
\begin{equation}
\arraycolsep=1.4pt
\begin{pmatrix} \blacksquare & \llblacktriangle & & & & & & &\\
\urblacktriangle & \blacksquare & & & & & \llblacktriangle & &\\
& & \mydiagdown & & & & \square & &\\
& & & \mydiagdown & & & \square & &\\
& & & & \myddots & & \myvdots & &\\
& & & & & \mydiagdown & \square & &\\
& \urblacktriangle & \square & \square & \mycdots & \square & \square & \lltriangle &\\
& & & & & & \urtriangle & \square & \myddots\\
& & & & & & & \myddots & \myddots\\
\end{pmatrix}
\end{equation}
\item Next, apply Householder reflectors to lower triangularize the entries in rows $b+1:3b$ and columns $\ell+1:\ell+b$:
\begin{equation}
\arraycolsep=1.4pt
\begin{pmatrix} \blacksquare & \blacksquare & \llblacktriangle & & & & & &\\
\blacksquare & \blacksquare & \blacksquare & & & & & &\\
\urblacktriangle & \blacksquare & \blacksquare & & & & \llblacktriangle & &\\
& & & \mydiagdown & & & \square & &\\
& & & & \myddots & & \myvdots & &\\
& & & & & \mydiagdown & \square & &\\
& & \urblacktriangle & \square & \mycdots & \square & \square & \lltriangle &\\
& & & & & & \urtriangle & \square & \myddots\\
& & & & & & & \myddots & \myddots\\
\end{pmatrix}
\end{equation}
Note that by lower triangularizing in the previous step, we extend entries into the rows $1:b$ and columns $2b+1:3b$ as a {\em lower triangular submatrix}.
\item Apply Householder reflectors to lower triangularize the entries in rows $1:2b$ and columns $2b+1:3b$:
\begin{equation}
\arraycolsep=1.4pt
\begin{pmatrix} \blacksquare & \blacksquare & & & & & & &\\
\blacksquare & \blacksquare & \llblacktriangle & & & & & &\\
& \urblacktriangle & \blacksquare & & & & \llblacktriangle & &\\
& & & \mydiagdown & & & \square & &\\
& & & & \myddots & & \myvdots & &\\
& & & & & \mydiagdown & \square & &\\
& & \urblacktriangle & \square & \mycdots & \square & \square & \lltriangle &\\
& & & & & & \urtriangle & \square & \myddots\\
& & & & & & & \myddots & \myddots\\
\end{pmatrix}
\end{equation}
And apply Householder reflectors to lower triangularize the entries in rows $1:b$ and columns $b+1:2b$:
\begin{equation}
\arraycolsep=1.4pt
\begin{pmatrix} \blacksquare & \llblacktriangle & & & & & & &\\
\urblacktriangle & \blacksquare & \llblacktriangle & & & & & &\\
& \urblacktriangle & \blacksquare & & & & \llblacktriangle & &\\
& & & \mydiagdown & & & \square & &\\
& & & & \myddots & & \myvdots & &\\
& & & & & \mydiagdown & \square & &\\
& & \urblacktriangle & \square & \mycdots & \square & \square & \lltriangle &\\
& & & & & & \urtriangle & \square & \myddots\\
& & & & & & & \myddots & \myddots\\
\end{pmatrix}
\end{equation}
\item Continuing, we apply Householder reflectors to lower triangularize the entries in rows $2b+1:4b$ and columns $\ell+1:\ell+b$:
\begin{equation}
\arraycolsep=1.4pt
\begin{pmatrix} \blacksquare & \llblacktriangle & & & & & & &\\
\urblacktriangle & \blacksquare & \blacksquare & \llblacktriangle & & & & &\\
& \blacksquare & \blacksquare & \blacksquare & & & & &\\
& \urblacktriangle & \blacksquare & \blacksquare & & & \llblacktriangle & &\\
& & & & \myddots & & \myvdots & &\\
& & & & & \mydiagdown & \square & &\\
& & & \urblacktriangle & \mycdots & \square & \square & \lltriangle &\\
& & & & & & \urtriangle & \square & \myddots\\
& & & & & & & \myddots & \myddots\\
\end{pmatrix}
\end{equation}
\item At this point, we are ready to paraphrase the remaining orthogonal similarity transformations. The bulge that has been created in the banded submatrix is chased back up by Householder reflectors toward the top left corner to return to banded form:
\begin{equation}
\arraycolsep=1.4pt
\begin{pmatrix} \blacksquare & \llblacktriangle & & & & & & &\\
\urblacktriangle & \blacksquare & \llblacktriangle & & & & & &\\
& \urblacktriangle & \blacksquare & \llblacktriangle & & & & &\\
& & \urblacktriangle & \blacksquare & & & \llblacktriangle & &\\
& & & & \myddots & & \myvdots & &\\
& & & & & \mydiagdown & \square & &\\
& & & \urblacktriangle & \mycdots & \square & \square & \lltriangle &\\
& & & & & & \urtriangle & \square & \myddots\\
& & & & & & & \myddots & \myddots\\
\end{pmatrix}
\end{equation}
\item Finally, further sequences of Householder reflectors continue to annihilate entries in the $\ell+1:\ell+b$ columns, chasing bulges in the banded submatrix in between, until a we arrive at a similar banded operator:
\begin{equation}
\arraycolsep=1.4pt
\begin{pmatrix} \blacksquare & \llblacktriangle & & & & & & &\\
\urblacktriangle & \blacksquare & \llblacktriangle & & & & & &\\
& \urblacktriangle & \blacksquare & \llblacktriangle & & & & &\\
& & \urblacktriangle & \blacksquare & \llblacktriangle & & & &\\
& & & \urblacktriangle & \blacksquare & \myddots & & &\\
& & & & \myddots & \myddots & \llblacktriangle & &\\
& & & & & \urblacktriangle & \square & \lltriangle &\\
& & & & & & \urtriangle & \square & \myddots\\
& & & & & & & \myddots & \myddots\\
\end{pmatrix}
\end{equation}
\end{enumerate}
\end{algorithm}

\subsection{Complexity analysis}

Based on the results in~\cite{Schwarz-12-231-68,Kaufman-26-551-00}, this algorithm requires:
\begin{enumerate}
\item $\OO(k^2b)$ flops for the symmetric banded spectral decomposition $L_k = Q\Lambda Q^\top$;

\item $\OO(kb^2)$ flops for the banded matrix-matrix product $M = L_b Q$ ($L_b$ has at most $b(b+1)/2$ nontrivial entries);

\item $\OO(j b)$ flops for computing the deflation criterion; and,

\item $\OO(\ell b^2)$ flops for folding in the symmetric spikes and $\OO(\ell^2 b)$ for chasing the symmetric bulges up and off the band.
\end{enumerate}

To ensure quadratic complexity, we must ensure the number of deflated eigenvalues in each iteration is not too small relative to the dimensions of the deflation window. To make this precise, let $j_1$ be the number of deflated eigenvalues after one step and let $j_2$ be the number after another. The total cost is:
\begin{equation}
\hbox{Two Step Cost} = 2 C_1 k^2 b + 2 C_2 kb^2 + C_3 (j_1+j_2) b + C_4 ( (k-j_1)^2 + (k-j_2)^2 ) b + C_5 ( (k-j_1) + (k-j_2) ) b^2,
\end{equation}
where $C_i$, for $i=1:5$, are positive constants that depend on the implementation.

Compare this with the same algorithm on the larger window of dimensions $(k+j_1) \times (k+j_1)$. This window theoretically deflates the same number of eigenvalues as two consecutive steps of fixed dimensions $k \times k$:
\begin{equation}
\hbox{One Large Step Cost} = C_1 (k+j_1)^2 b + C_2 (k+j_1) b^2 + C_3 (j_1+j_2) b + C_4 (k-j_2)^2 b + C_5 (k-j_2) b^2.
\end{equation}
If we assume that $C_1 \gg C_2+C_3+C_4+C_5$, then this implies that the spectral decomposition is more expensive than computing the banded matrix-matrix multiplication, deflation criterion, bulge chasing, and spike folding. With this assumption, for the two step cost to break even with the larger one step cost, the number of eigenvalues that must be deflated is:
\begin{equation}
j_1 \ge (\sqrt{2} - 1) k \approx 0.41 k.
\end{equation}
If this is not the case, then we double the dimensions of the deflation window.

\subsection{Determining the deflation criterion}

In aggressive early deflation, it is important to identify which eigenvalues have converged in the working precision, subject to rounding errors. Based on Eq.~\eqref{eq:similarL}, partition the similar operator as:
\begin{equation}\label{eq:3x3partition}
\begin{pmatrix} \Lambda & M^\top\\
M & \LL_{k+1:\infty}\\
\end{pmatrix}
= \begin{pmatrix}
\Lambda_j & 0 & M_j^\top\\
0 & \Lambda_k & M_k^\top\\
M_j & M_k & \LL_\infty
\end{pmatrix},
\end{equation}
where $M_j = M_{1:\infty,1:j}$, $M_k = M_{1:\infty,j+1:k}$, $\Lambda_j = \Lambda_{1:j,1:j}$, $\Lambda_k = \Lambda_{j+1:k,j+1:k}$, and $\LL_\infty = \LL_{k+1:\infty}$ for brevity.
To determine the deflation criterion, we appeal to the block determinant formul\ae:
\begin{equation}
\det\begin{pmatrix} A & B\\ C & D\end{pmatrix} = \det(A)\det(D - CA^{-1}B) = \det(D)\det(A-BD^{-1}C),
\end{equation}
provided the inverses exist. If we use the formul\ae~{\em twice} reflecting the partition in Eq.~\eqref{eq:3x3partition}, then:
\begin{align}
\det(\LL-\lambda \II) & = \det(\LL_\infty - \lambda\II) \det(\Lambda - \lambda I - M^\top (\LL_\infty-\lambda\II)^{-1} M),\nonumber\\
& = \det(\LL_\infty - \lambda\II) \det\left[\begin{pmatrix} \Lambda_j - \lambda I &\\ & \Lambda_k - \lambda I\end{pmatrix} - \begin{pmatrix} M_j^\top (\LL_\infty-\lambda\II)^{-1} M_j & M_j^\top (\LL_\infty-\lambda\II)^{-1} M_k\\ M_k^\top (\LL_\infty-\lambda\II)^{-1} M_j & M_k^\top (\LL_\infty-\lambda\II)^{-1} M_k \end{pmatrix} \right],\nonumber\\
& = \det(\LL_\infty - \lambda\II)\det\left(\Lambda_j-\lambda I -M_j^\top(\LL_\infty - \lambda I)^{-1} M_j\right)\nonumber\\
& \times \det\Big[\Lambda_k-\lambda I - M_k^\top(\LL_\infty - \lambda I)^{-1}M_k\nonumber\\
& \quad - M_k^\top(\LL_\infty - \lambda I)^{-1}M_j\left( \Lambda_j-\lambda I - M_j^\top(\LL_\infty - \lambda I)^{-1}M_j\right)^{-1} M_j^\top(\LL_\infty - \lambda I)^{-1}M_k\Big].\label{eq:SB_AED_blockdeterminant}
\end{align}
The key to understanding Eq.~\eqref{eq:SB_AED_blockdeterminant} is the second determinant in the product:
\begin{equation}
\det\left(\Lambda_j-\lambda I -M_j^\top(\LL_\infty - \lambda I)^{-1} M_j\right).
\end{equation}
This determinant informs us how close the first $j$ eigenvalues are to deflation, for if $M_j \equiv 0$, then complete deflation is admissible. Numerically, we deflate when $M_j^\top (\LL_\infty - \lambda I)^{-1} M_j$ is relatively small compared with $\Lambda_j-\lambda I$. Since the operator is self-adjoint, we are interested in the relative norms over the spectrum of $\Lambda_j$:
\begin{equation}
\max_{\lambda_{\rm min}(\Lambda_j) \le \lambda \le \lambda_{\rm max}(\Lambda_j)} \norm{\Lambda_j-\lambda I} = \abs{\lambda_{\rm max}(\Lambda_j) - \lambda_{\rm min}(\Lambda_j)} \le \abs{\lambda_{\rm max}(\Lambda_j)} + \abs{\lambda_{\rm min}(\Lambda_j)},
\end{equation}
for any induced $p$-norm. On the other hand, the analysis of:
\begin{align*}
\max_{\lambda_{\rm min}(\Lambda_j) \le \lambda \le \lambda_{\rm max}(\Lambda_j)} \norm{M_j^\top(\LL_\infty - \lambda I)^{-1} M_j}  & \le \norm{M_j^\top}\norm{M_j} \max_{\lambda_{\rm min}(\Lambda_j) \le \lambda \le \lambda_{\rm max}(\Lambda_j)} \norm{(\LL_\infty - \lambda I)^{-1}},\\
& = \norm{M_j}^2 \max_{\lambda_{\rm min}(\Lambda_j) \le \lambda \le \lambda_{\rm max}(\Lambda_j)} \norm{(\LL_\infty - \lambda I)^{-1}}.
\end{align*}
is simplified in the induced $2$-norm, for the norm of the inverse of the infinite-dimensional operator $\LL_\infty - \lambda I$ may be rephrased in terms of the smallest eigenvalue of $(\LL_\infty - \lambda I)^{-1}$ for nearest $\lambda$, that is, the separation distance between the spectra of $\Lambda_j$ and $\LL_\infty$.

Using the above analysis, we derive the heuristic deflation criterion in Eq.~\eqref{eq:SB_AED_deflationcriterion} by assuming that the smallest eigenvalue of $\LL_\infty$ is larger than the largest eigenvalue of $\Lambda$, in absolute value, and by using the equivalence of (finite-dimensional) Frobenius and induced $2$-norms. The Frobenius norm simplifies the computation, since no singular values need be computed, and since the square of the $2$-norm of the $j^{\rm th}$ column of $M$ may be added to $\norm{M_{1:b,1:j-1}}_F^2$ recursively:
\begin{equation}
\norm{M_{1:b,1:j}}_F^2 = \norm{M_{1:b,1:j-1}}_F^2 + \norm{M_{1:b,j}}_2^2.
\end{equation}

\subsection{An adaptive generalized spectral decomposition}

As in the applications of \S~\ref{section:eigenvalueapplications}, our Petrov--Galerkin scheme for a regular eigenvalue problem may require reformulation as a generalized eigenvalue problem:
\begin{equation}
\AA u = \lambda \BB u.
\end{equation}
We will assume that the $b$ is the maximum of the bandwidths of $\AA$ and $\BB$ and that $\BB$ is also positive-definite.

\begin{algorithm}\label{algorithm:SDB_AED}
Initialize $k := 2b$. Partition $\AA$ into:
\begin{equation}
\AA = \begin{pmatrix}
\AA_{1:k,1:k} & \AA_{k+1:k+b,1:k}^\top & 0\\
\AA_{k+1:k+b,1:k} & \AA_{k+1:k+b,k+1:k+b} & \AA_{k+b+1:\infty,k+1:k+b}^\top\\
0 & \AA_{k+b+1:\infty,k+1:k+b} & \AA_{k+b+1:\infty,k+b+1:\infty}
\end{pmatrix}
=: \begin{pmatrix}
A_k & A_b^\top & 0\\
A_b & A_{kb} & A_{2b}^\top\\
0 & A_{2b} & \AA_{k+b+1:\infty}\\
\end{pmatrix},
\end{equation}
and partition $\BB$ conformably:
\begin{equation}
\BB = \begin{pmatrix}
\BB_{1:k,1:k} & \BB_{k+1:k+b,1:k}^\top & 0\\
\BB_{k+1:k+b,1:k} & \BB_{k+1:k+b,k+1:k+b} & \BB_{k+b+1:\infty,k+1:k+b}^\top\\
0 & \BB_{k+b+1:\infty,k+1:k+b} & \BB_{k+b+1:\infty,k+b+1:\infty}
\end{pmatrix}
=: \begin{pmatrix}
B_k & B_b^\top & 0\\
B_b & B_{kb} & B_{2b}^\top\\
0 & B_{2b} & \BB_{k+b+1:\infty}\\
\end{pmatrix}.
\end{equation}
In $\OO(k^2b)$ operations, compute the symmetric-definite banded generalized spectral decomposition:
\begin{equation}
V^\top A_k V = \Lambda,\quad{\rm where}\quad V^\top B_k V = I,
\end{equation}
and use the eigenvectors $V$ to rewrite the partition as a congruence transformation on the {\em operators}:
\begin{align*}
\AA & = \begin{pmatrix} V^{-\top} & & \\ & I & \\ & & \II \end{pmatrix}
\begin{pmatrix} \Lambda & V^\top A_b^\top & 0\\
A_bV & A_{kb} & A_{2b}^\top \\
0 & A_{2b} & \AA_{k+b+1:\infty}
\end{pmatrix}
\begin{pmatrix} V^{-1} & & \\ & I & \\ & & \II \end{pmatrix},\quad{\rm and}\\~\\
\BB & = \begin{pmatrix} V^{-\top} & & \\ & I & \\ & & \II \end{pmatrix}
\begin{pmatrix} I & V^\top B_b^\top & 0\\
B_bV & B_{kb} & B_{2b}^\top \\
0 & B_{2b} & \BB_{k+b+1:\infty}
\end{pmatrix}
\begin{pmatrix} V^{-1} & & \\ & I & \\ & & \II \end{pmatrix}.
\end{align*}
Define $M := A_bV$ and $N := B_bV$ and set $j$ to be the largest integer multiple of $b$ that satisfies the inequalities:
\begin{equation}\label{eq:SDB_AED_deflationcriterion}
\norm{M_{1:b,1:j}}_F^2 + \abs{\lambda_{\rm max}(\Lambda_{1:j,1:j})}^2\norm{N_{1:b,1:j}}_F^2 \le {\tt tol} \left( \abs{\lambda_{\rm max}(\Lambda_{1:j,1:j})} + \abs{\lambda_{\rm min}(\Lambda_{1:j,1:j})} \right) \left( \abs{\lambda_{\rm max}(\Lambda_{1:j,1:j})} + \abs{\lambda_{\rm max}(\Lambda)} \right),
\end{equation}
for some tolerance ${\tt tol} > 0$. For eigenvalues only, ${\tt tol} = \epsilon$ and for both eigenvalues and eigenfunctions, ${\tt tol = \epsilon^2}$. If there is no integer $j$ that satisfies the inequalities, then double $k$ and reiterate until deflation sets in.

After determining that $j$ eigenvalues are to be deflated, we have a symmetric-definite operator pencil, and the second operator has unit diagonal elements for the first $k$ entries. We perturb the operators by setting $M_{1:b,1:j} \leftarrow 0$ and $N_{1:b,1:j} \leftarrow 0$ in order to deflate the first $j$ generalized eigenvalues. Thus, the remaining parts of the congruent operators are of the same form:
\begin{equation}
\arraycolsep=1.4pt
\begin{pmatrix} \mydiagdown & & & & & & \square & &\\
& \mydiagdown & & & & & \square & &\\
& & \mydiagdown & & & & \square & &\\
& & & \mydiagdown & & & \square & &\\
& & & & \myddots & & \myvdots & &\\
& & & & & \mydiagdown & \square & &\\
\square & \square & \square & \square & \mycdots & \square & \square & \lltriangle &\\
& & & & & & \urtriangle & \square & \myddots\\
& & & & & & & \myddots & \myddots\\
\end{pmatrix}~,\qquad
\begin{pmatrix} \mydiagdown & & & & & & \square & &\\
& \mydiagdown & & & & & \square & &\\
& & \mydiagdown & & & & \square & &\\
& & & \mydiagdown & & & \square & &\\
& & & & \myddots & & \myvdots & &\\
& & & & & \mydiagdown & \square & &\\
\square & \square & \square & \square & \mycdots & \square & \square & \lltriangle &\\
& & & & & & \urtriangle & \square & \myddots\\
& & & & & & & \myddots & \myddots\\
\end{pmatrix}
\end{equation}
We now perform congruence transformations to return the operator pencil to symmetric-definite banded form.

The following symmetric positive-definite matrix has a sparse Cholesky factorization:
\begin{equation}
\begin{pmatrix} I & N^\top\\
N & B_{kb}\\
\end{pmatrix}
= \begin{pmatrix} I\\
N & F\\
\end{pmatrix} \begin{pmatrix} I\\
N & F\\
\end{pmatrix}^\top, \quad{\rm where}\quad F F^\top = B_{kb} - N N^\top.
\end{equation}
Embedding the Cholesky factors in the identity operator:
\begin{equation}
\begin{pmatrix} I & N^\top & 0\\
N & B_{kb} & B_{2b}^\top \\
0 & B_{2b} & \BB_{k+b+1:\infty}
\end{pmatrix}
= \begin{pmatrix} I\\
N & F\\ & & \II
\end{pmatrix}
\begin{pmatrix} I & &\\
& I & F^{-1}B_{2b}^\top \\
& B_{2b}F^{-\top} & \BB_{k+b+1:\infty}
\end{pmatrix}
\begin{pmatrix} I\\
N & F\\ & & \II
\end{pmatrix}^\top.
\end{equation}
Similarly:
\begin{align*}
& \begin{pmatrix} \Lambda & M^\top & 0\\
M & A_{kb} & A_{2b}^\top \\
0 & A_{2b} & \AA_{k+b+1:\infty}
\end{pmatrix}\\
& = \begin{pmatrix} I\\
N & F\\ & & \II
\end{pmatrix}
\begin{pmatrix} \Lambda & (M-N\Lambda)^\top F^{-\top} &\\
F^{-1}(M-N\Lambda) & F^{-1}(A_{kb}-N\Lambda N^\top-MN^\top-NM^\top)F^{-\top} & F^{-1}A_{2b}^\top \\
& A_{2b}F^{-\top} & \AA_{k+b+1:\infty}
\end{pmatrix}
\begin{pmatrix} I\\
N & F\\ & & \II
\end{pmatrix}^\top.
\end{align*}
This congruence transformation results in the operator pencil:
\begin{equation}
\arraycolsep=1.4pt
\begin{pmatrix} \mydiagdown & & & & & & \blacksquare & &\\
& \mydiagdown & & & & & \blacksquare & &\\
& & \mydiagdown & & & & \blacksquare & &\\
& & & \mydiagdown & & & \blacksquare & &\\
& & & & \myddots & & \myvdots & &\\
& & & & & \mydiagdown & \blacksquare & &\\
\blacksquare & \blacksquare & \blacksquare & \blacksquare & \mycdots & \blacksquare & \blacksquare & \llblacktriangle &\\
& & & & & & \urblacktriangle & \square & \myddots\\
& & & & & & & \myddots & \myddots\\
\end{pmatrix}~,\qquad
\begin{pmatrix} \mydiagdown & & & & & & & &\\
& \mydiagdown & & & & & & &\\
& & \mydiagdown & & & & & &\\
& & & \mydiagdown & & & & &\\
& & & & \myddots & & & &\\
& & & & & \mydiagdown & & &\\
& & & & & & \blacksquare & \llblacktriangle &\\
& & & & & & \urblacktriangle & \square & \myddots\\
& & & & & & & \myddots & \myddots\\
\end{pmatrix}
\end{equation}
As in Algorithm~\ref{algorithm:SB_AED}, black symbols represent the cumulative modification of nonzero submatrices. The first operator is still symmetric and the second operator also retains symmetry and positive-definiteness.

Finally, we are free to use the same orthogonal similarity transformations described in Algorithm~\ref{algorithm:SB_AED}; these similarity transformations have no effect on the second operator since the first $\ell$ diagonal entries are unit, guaranteeing our return to symmetric-definite banded form:
\begin{equation}
\arraycolsep=1.4pt
\begin{pmatrix} \blacksquare & \llblacktriangle & & & & & & &\\
\urblacktriangle & \blacksquare & \llblacktriangle & & & & & &\\
& \urblacktriangle & \blacksquare & \llblacktriangle & & & & &\\
& & \urblacktriangle  & \blacksquare & \myddots & & & &\\
& & & \myddots & \myddots & \llblacktriangle & & &\\
& & & & \urblacktriangle & \blacksquare & \llblacktriangle & &\\
& & & & & \urblacktriangle & \blacksquare & \llblacktriangle &\\
& & & & & & \urblacktriangle & \square & \myddots\\
& & & & & & & \myddots & \myddots\\
\end{pmatrix}~,\qquad
\begin{pmatrix} \mydiagdown & & & & & & & &\\
& \mydiagdown & & & & & & &\\
& & \mydiagdown & & & & & &\\
& & & \mydiagdown & & & & &\\
& & & & \myddots & & & &\\
& & & & & \mydiagdown & & &\\
& & & & & & \blacksquare & \llblacktriangle &\\
& & & & & & \urblacktriangle & \square & \myddots\\
& & & & & & & \myddots & \myddots\\
\end{pmatrix}
\end{equation}
\end{algorithm}

\subsection{Summary of the data structures}\label{subsection:datastructures}

The adaptive spectral decomposition returns eigenfunctions as a product of orthogonal operators:
\begin{equation}
\LL = \QQ_1 \QQ_2 \cdots \QQ_\infty \Lambda \QQ_\infty^\top \cdots \QQ_2^\top \QQ_1^\top.
\end{equation}
The orthogonal operators $\QQ_i$ consist of the orthogonal matrices that implement the similarity transformations embedded in identity operators.

If the first $n$ eigenfunctions are desired, then the product of the orthogonal operators up to and beyond convergence of the eigenvalues is formed, at a cost of $\OO(n^3)$. If only a subset of the eigenfunctions is desired, then it is possible to reconstruct them in lower complexity. If the operator is applied to a function, then this may be performed in $\OO(n^2)$ operations.

In the same spirit, the adaptive generalized spectral decomposition returns generalized eigenfunctions as a product of operators:
\begin{equation}
\VV_\infty^\top \cdots \VV_2^\top \VV_1^\top \AA \VV_1 \VV_2 \cdots \VV_\infty = \Lambda, \quad{\rm where}\quad \VV_\infty^\top \cdots \VV_2^\top \VV_1^\top \BB \VV_1 \VV_2 \cdots \VV_\infty = \II.
\end{equation}

In certain cases, the complexity of our adaptive spectral decomposition may be linear. Thus, if we only need to {\em apply} the operator (or some function of it) to a function, then the product form returned by the adaptive spectral decomposition is ideal. We have observed linear complexity for a bounded perturbation of an (unbounded) spectral operator defining the expansion basis. In this setting, the $n^{\rm th}$ eigenfunction requires at most $n + \OO(\log\epsilon^{-1})$ degrees of freedom to resolve. This is reasonable from an approximation-theoretic point of view since the asymptotic distribution of roots of the sought eigenfunctions is the same as the spectral basis used to represent them. Precise bounds for the special cases of the Mathieu and spheroidal wave equations have been described by Volkmer~\cite{Volkmer-20-39-04}.

\section{The fractional Schr\"odinger equation}

There are many applications that benefit from the spectral decomposition of a linear differential operator. Consider the time-dependent fractional Schr\"odinger equation~\cite{Laskin-268-298-00,Laskin-66-056108-1-02}:
\begin{equation}\label{eq:FractionalSchrodinger}
\ii u_t = \HH_\alpha u,\quad{\rm where}\quad u(\theta,t=0) = u_0(\theta),
\end{equation}
and where $\HH_\alpha$ is a self-adjoint time-independent Hamiltonian and the L\'evy index $0<\alpha\le2$ denotes the strength of the implied fractional derivative. The formal solution of this equation is:
\begin{equation}
u(\theta,t) = \exp(-\ii t\HH_\alpha)u_0(\theta).
\end{equation}
If the partial spectral decomposition of $\HH_\alpha$ is available, then this problem no longer requires time-stepping. We consider $\HH_\alpha$ to be a fractional Mathieu operator~\cite{Mathieu-137-68,Kato-80} with periodic boundary conditions and a periodic potential on $[0,2\pi)$:
\begin{equation}
\HH_\alpha = \left(-\DD^2\right)^{\frac{\alpha}{2}} + 2q\cos(2\theta)\II.
\end{equation}
The real number $q$ determines the relative strength of the potential compared with the differential operator. We use a real-valued orthonormal Fourier series to represent the solution:
\begin{equation}
u(\theta,t) = u_0(t)\frac{1}{\sqrt{2\pi}} + u_{-1}(t)\dfrac{\sin(\theta)}{\sqrt{\pi}} + u_1(t)\dfrac{\cos(\theta)}{\sqrt{\pi}} + u_{-2}(t)\dfrac{\sin(2\theta)}{\sqrt{\pi}} + u_2(t)\dfrac{\cos(2\theta)}{\sqrt{\pi}} + \cdots +,
\end{equation}
where the ordering is chosen so that the regularity of the solution is observed in the decay of the coefficients. When $q=0$, the eigenfunctions are pure Fourier modes and otherwise, the fractional Mathieu operator is a bounded perturbation of the (unbounded) fractional Laplacian~\cite{Kwasnicki-20-7-17,Lischke-et-al-1801-09767}. Using the differentiation and linearization properties of trigonometric functions, the fractional Mathieu operator is:
\begin{equation}
\HH_\alpha = \begin{pmatrix} 0 & 0 & 0 & 0 & \sqrt{2}q\\ 0 & 1^\alpha-q & & & & q\\ 0 & & 1^\alpha+q & & & & q\\ 0 & & & 2^\alpha & & & & \ddots\\ \sqrt{2}q & & & & 2^\alpha\\ & q & & & & 3^\alpha\\ & & q & & & & 3^\alpha\\ & & & \ddots & & & & \ddots\\ \end{pmatrix}.
\end{equation}
Figure~\ref{fig:FractionalSchrodinger} shows the evolution of the fractional Schr\"odinger equation for various values of $\alpha$ and $q$. For an initial condition whose frequency is on the order of $q$, the particle densities of the classical Schr\"odinger equation barely notice the potential. On the other hand, for large deviations from a uniform potential, the fractional Schr\"odinger equation appears to focus the particle densities around the wells when $\alpha\ll2$.

\begin{figure}
\begin{center}
\includegraphics[width=\textwidth]{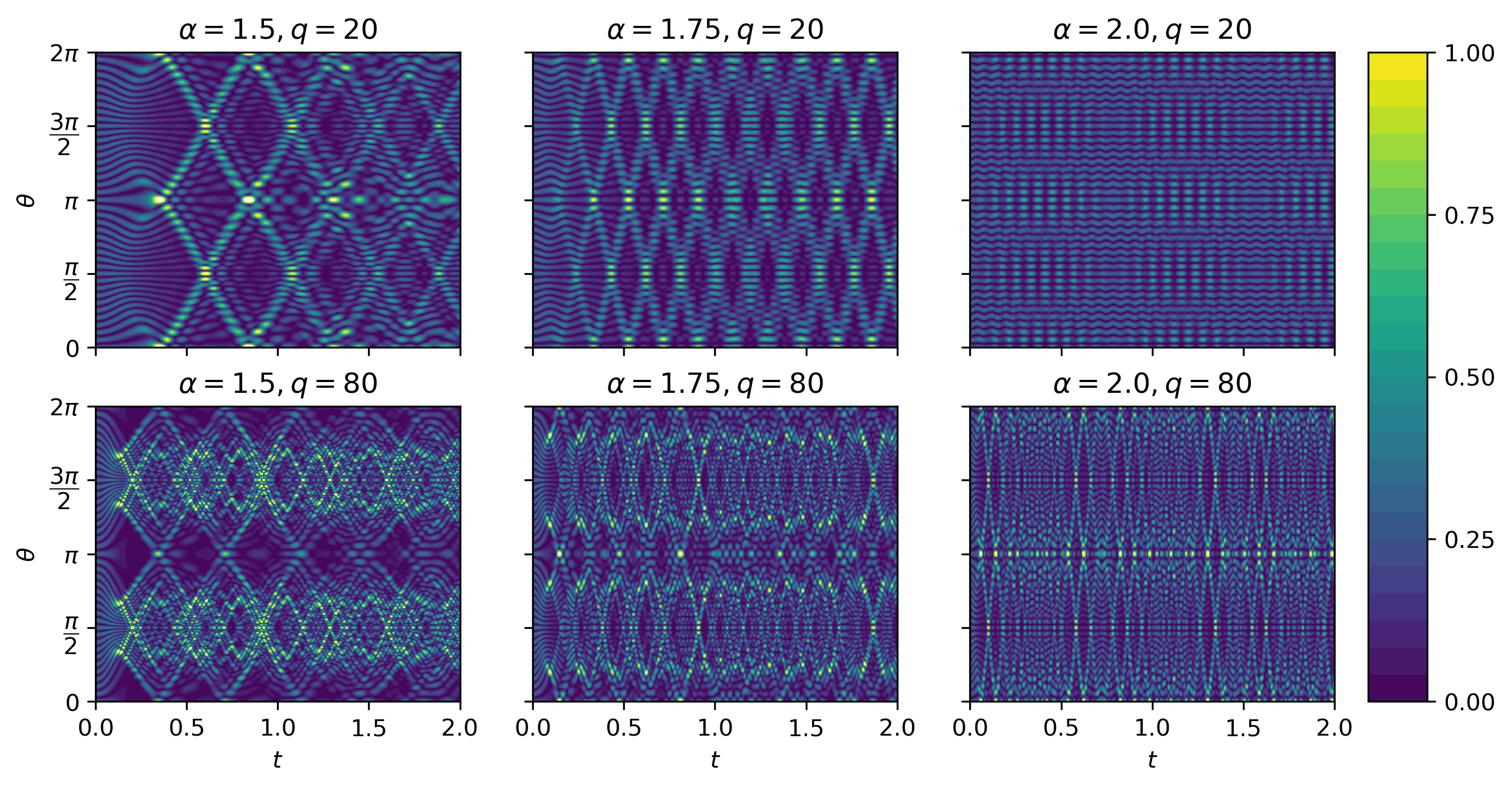}
\caption{Numerical solution of the fractional Schr\"odinger equation~\eqref{eq:FractionalSchrodinger} with two values of $q$ and three values of $\alpha$. The plots illustrate $\abs{u(\theta,t)}^2$ for the initial condition $u_0(\theta) = \cos(20\theta)/\sqrt{\pi}$. Although each plot has a different absolute maximum, we use the same colour scale to facilitate comparison.}
\label{fig:FractionalSchrodinger}
\end{center}
\end{figure}

It is straightforward to confirm that the fractional Schr\"odinger operator is unitary; therefore, the $L^2$-norm of the solution is a constant of motion. While unitarity preserving time-stepping schemes are of great interest for such problems, we note that unitarity preservation is also a property of a partial spectral decomposition. In Figure~\ref{fig:FractionalSchrodingerErrTime}, we monitor the numerical error in the $L^2$-norm of the Fourier coefficients starting with two $2$-normalized initial conditions: first, we start with $n = 10,001$ coefficients that mimic an analytic function $u_{\pm i} = \exp(-36(i+1)/\lceil\frac{n}{2}\rceil)$; second, with pseudorandom standard normal coefficients $u_{\pm i} = \NN(0,1)$, for $i=0,\ldots,\lfloor\frac{n}{2}\rfloor$. In both cases, the norm of the solution is very well preserved on long time scales. We also show the calculation time for the partial spectral decomposition and execution time of the eigenmatrix-vector product as a function of the truncation degree of the Fourier series, confirming the $\OO(n)$ complexity predicted in \S~\ref{subsection:datastructures}.

\begin{figure}
\begin{center}
\begin{tabular}{cc}
\hspace*{-0.2cm}\includegraphics[width=0.53\textwidth]{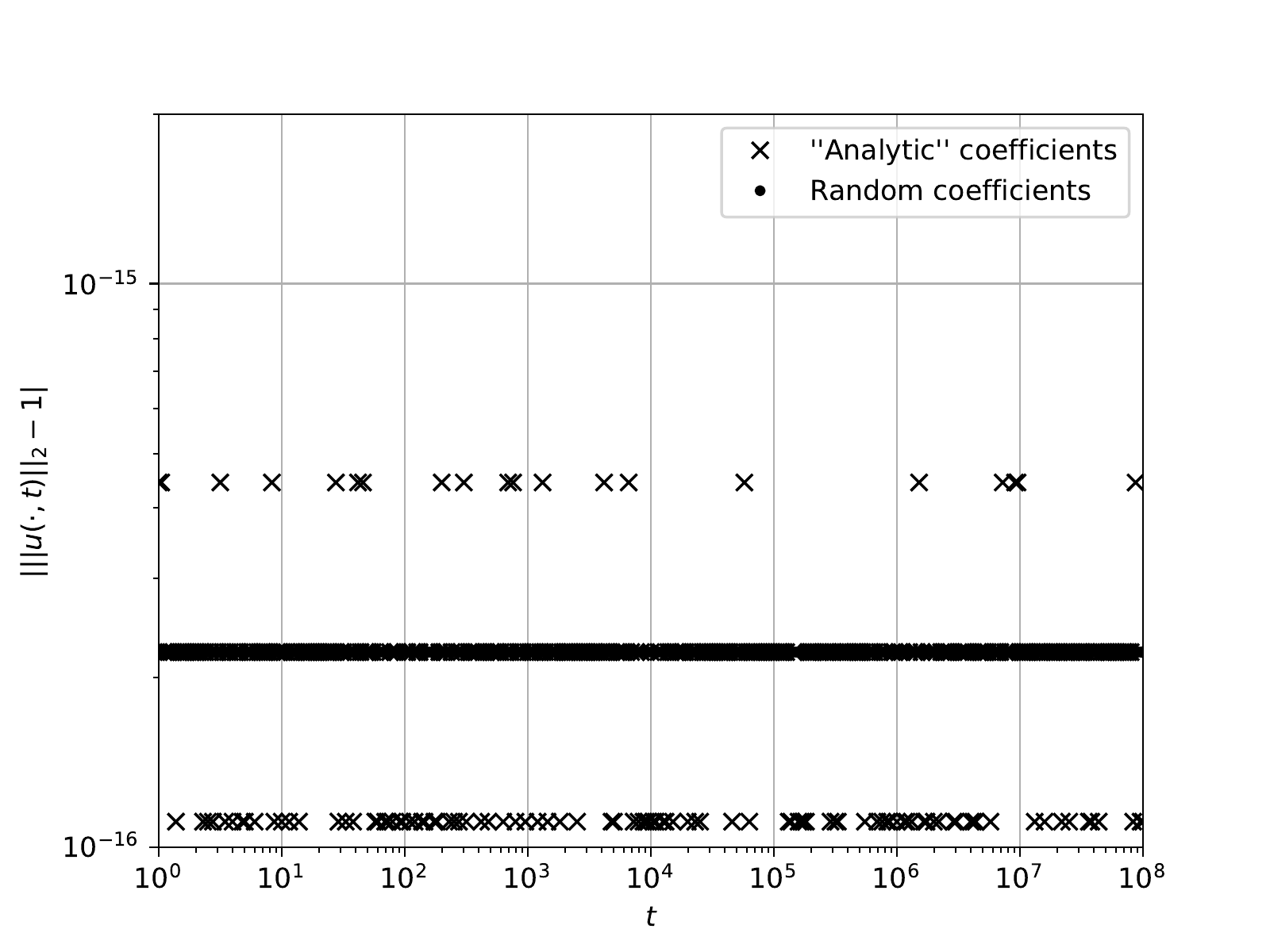}&
\hspace*{-0.65cm}\includegraphics[width=0.53\textwidth]{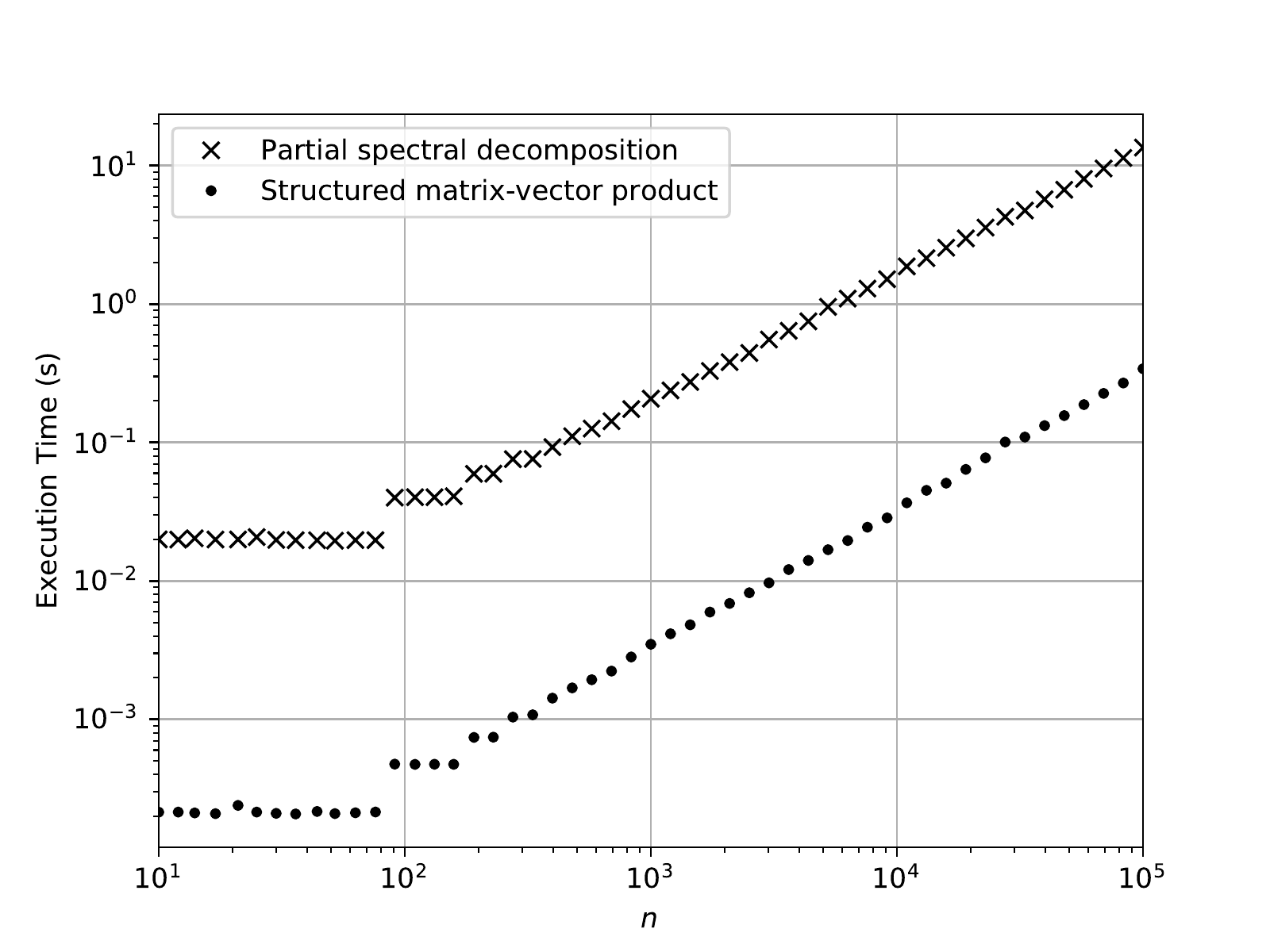}\\
\end{tabular}
\caption{Left: numerical error in the $L^2$-norm constant of motion for $n=10,001$ structured Fourier coefficients. Right: execution time of $\exp(-\ii \HH_\alpha) u_0(\theta)$ as a function of the truncation degree $n$ of $u_0$. In both plots, $\alpha = 1.5$ and $q = 80$.}
\label{fig:FractionalSchrodingerErrTime}
\end{center}
\end{figure}

\section{Conclusion \& outlook}

There are many avenues for future work. It is straightforward to consider symmetrizing eigenvalue problems from singular integral equations~\cite{Slevinsky-Olver-332-290-17}. Another natural extension is to eigenvalue problems in multiple dimensions. This includes tensor-product spectral methods~\cite{Townsend-Olver-299-106-15} and total-degree spectral methods on spheres, disks, and triangles~\cite{Olver-Townsend-Vasil-1902-04863}. In the former, operators must have a type of separable structure to permit fast linear algebra such as an eigen-analogue of the generalized Sylvester matrix equation, whereas in the latter, the logic of Algorithms~\ref{algorithm:SB_AED} and~\ref{algorithm:SDB_AED} is the same but the blocks are now banded matrices themselves whose dimensions increase with the block indices.

We presented an algorithm for an adaptive spectral decomposition where approximations are made in the deflation criterion and in the use of floating-point arithmetic. The FMM has been adapted to provide asymptotically fast approximate application of the eigenvectors of symmetric arrowhead matrices~\cite{Borges-Gragg-11-93,Gu-Eisenstat-16-172-95}, where the approximate precision is a tuneable parameter. The FMM may in principle be utilized to accelerate the spectral decomposition of the block arrow matrices that arise in Eq.~\eqref{eq:firstpartition}, though locating the spectrum may be more challenging in the blocked generalization.

Other variants of infinite-dimensional linear algebra could be pursued. These include an adaptive singular value decomposition and an adaptive Schur decomposition for non-self-adjoint problems. The symmetrizer may also be useful in the design and analysis of contours for spectral projectors in, e.g. the {\sf contFEAST} algorithm~\cite{Horning-Townsend-1901-04533}.

\section*{Acknowledgments}

We thank Yuji Nakatsukasa for a private communication on the spectral analysis of the model problem in \S~\ref{subsection:spectralanalysis}. We thank Andrew Horning and Marcus Webb for a careful reading of the manuscript. JLA is supported by ``la Caixa'' Banking Foundation, through the Postdoctoral Junior Leader Fellowship Programme. RMS is supported by the Natural Sciences and Engineering Research Council of Canada, through a Discovery Grant (RGPIN-2017-05514).

\bibliography{/Users/Mikael/Bibliography/Mik}

\begin{thebibliography}{10}

\bibitem{Golub-Van-Loan-13}
G.~H. Golub and C.~F.~Van Loan.
\newblock {\em Matrix Computations}.
\newblock The Johns Hopkins University Press, fourth edition, 2013.

\bibitem{Canuto-23-815-86}
C.~Canuto.
\newblock Boundary conditions in {C}hebyshev and {L}egendre methods.
\newblock {\em SIAM J. Numer. Anal.}, 23:815--831, 1986.

\bibitem{Driscoll-229-5980-10}
T.~A. Driscoll.
\newblock Automatic spectral collocation for integral, integro-differential,
  and integrally reformulated differential equations.
\newblock {\em J. Comp. Phys.}, 229:5980--5998, 2010.

\bibitem{Gottlieb-Lustman-20-909-83}
D.~Gottlieb and L.~Lustman.
\newblock The spectrum of the {C}hebyshev collocation operator for the heat
  equation.
\newblock {\em SIAM J. Numer. Anal.}, 20:909--921, 1983.

\bibitem{Chen-Ma-206-748-08}
L.~Chen and H.-P. Ma.
\newblock Approximate solution of the {S}turm--{L}iouville problems with
  {L}egendre--{G}alerkin--{C}hebyshev collocation method.
\newblock {\em Appl. Math. Comput.}, 206:748--754, 2008.

\bibitem{Olver-Townsend-55-462-13}
S.~Olver and A.~Townsend.
\newblock A fast and well-conditioned spectral method.
\newblock {\em SIAM Rev.}, 55:462--489, 2013.

\bibitem{Vasil-et-al-325-53-16}
G.~M. Vasil, K.~J. Burns, D.~Lecoanet, S.~Olver, B.~P. Brown, and J.~S. Oishi.
\newblock Tensor calculus in polar coordinates using {J}acobi polynomials.
\newblock {\em J. Comp. Phys.}, 325:53--73, 2016.

\bibitem{Lecoanet-et-al-1804-09283}
D.~Lecoanet, G.~Vasil, K.~Burns, B.~Brown, and J.~Oishi.
\newblock Tensor calculus in spherical coordinates using {J}acobi polynomials.
  {P}art-{II}: {I}mplementation and examples.
\newblock arXiv:1804.09283, 2018.

\bibitem{Weideman-Trefethen-25-1279-88}
J.~A.~C. Weideman and L.~N. Trefethen.
\newblock The eigenvalues of second-order spectral differentiation matrices.
\newblock {\em SIAM J. Numer. Anal.}, 25:1279--1298, 1988.

\bibitem{Rayleigh-37}
Lord Rayleigh.
\newblock {\em The Theory of Sound}.
\newblock MacMillan, New York, NY, 1937.

\bibitem{Parlett-28-679-74}
B.~N. Parlett.
\newblock The {R}ayleigh quotient iteration and some generalizations for
  nonnormal matrices.
\newblock {\em Math. Comp.}, 28:679--693, 1974.

\bibitem{Nakatsukasa-80-2127-11}
Y.~Nakatsukasa.
\newblock Gerschgorin's theorem for generalized eigenvalue problems in the
  {E}uclidean metric.
\newblock {\em Math. Comp.}, 80:2127--2142, 2011.

\bibitem{Boyd-00}
J.~P. Boyd.
\newblock {\em Chebyshev and Fourier Spectral Methods}.
\newblock Dover Publications Inc., second edition, 2000.

\bibitem{Iserles-Webb-NA2019/01}
A.~Iserles and M.~Webb.
\newblock Fast computation of orthogonal systems with a skew-symmetric
  differentiation matrix.
\newblock DAMTP Tech. Rep. NA2019/01, University of Cambridge, 2019.

\bibitem{Shen-15-1489-94}
J.~Shen.
\newblock Efficient spectral-{G}alerkin method {I}: direct solvers of second-
  and fourth-order equations using {L}egendre polynomials.
\newblock {\em SIAM J. Sci. Comput.}, 15:1489--1505, 1994.

\bibitem{Shen-16-74-95}
J.~Shen.
\newblock Efficient spectral-{G}alerkin method {II}: direct solvers of second-
  and fourth-order equations using {C}hebyshev polynomials.
\newblock {\em SIAM J. Sci. Comput.}, 16:74--87, 1995.

\bibitem{Beuchler-Schoberl-103-339-06}
S.~Beuchler and J.~Sch\"oberl.
\newblock New shape functions for triangular $p$-{FEM} using integrated
  {J}acobi polynomials.
\newblock {\em Numer. Math.}, 103:339--366, 2006.

\bibitem{Trefethen-12}
L.~N. Trefethen.
\newblock {\em Approximation Theory and Approximation Practice}.
\newblock SIAM, Philadelphia, PA, 2012.

\bibitem{Greengard-Rokhlin-73-325-87}
L.~Greengard and V.~Rokhlin.
\newblock A fast algorithm for particle simulations.
\newblock {\em J. Comp. Phys.}, 73:325--348, 1987.

\bibitem{Alpert-Rokhlin-12-158-91}
B.~K. Alpert and V.~Rokhlin.
\newblock A fast algorithm for the evaluation of {L}egendre expansions.
\newblock {\em SIAM J. Sci. Stat. Comput.}, 12:158--179, 1991.

\bibitem{Keiner-31-2151-09}
J.~Keiner.
\newblock Computing with expansions in {G}egenbauer polynomials.
\newblock {\em SIAM J. Sci. Comput.}, 31:2151--2171, 2009.

\bibitem{Keiner-11}
J.~Keiner.
\newblock {\em Fast Polynomial Transforms}.
\newblock Logos Verlag, Berlin, 2011.

\bibitem{Amrein-Hinz-Pearson-05}
W.~O. Amrein, A.~M. Hinz, and D.~B. Pearson, editors.
\newblock {\em Sturm--Liouville Theory: Past and Present}.
\newblock Birkh\"auser Verlag, Basel, Switzerland, 2005.

\bibitem{Olver-et-al-NIST-10}
F.~W.~J. Olver, D.~W. Lozier, R.~F. Boisvert, and C.~W. Clark, editors.
\newblock {\em NIST Handbook of Mathematical Functions}.
\newblock Cambridge U. P., Cambridge, UK, 2010.

\bibitem{Doha-Abd-Elhameed-24-548-02}
E.~H. Doha and W.~M. Abd-Elhameed.
\newblock Efficient spectral-{G}alerkin algorithms for direct solution of
  second-order equations using ultraspherical polynomials.
\newblock {\em SIAM J. Sci. Comput.}, 24:548--571, 2002.

\bibitem{Doha-Bhrawy-42-137-06}
E.~H. Doha and A.~H. Bhrawy.
\newblock Efficient spectral-{G}alerkin algorithms for direct solution for
  second-order differential equations using {J}acobi polynomials.
\newblock {\em Numer. Algor.}, 42:137--164, 2006.

\bibitem{Julien-Watson-228-1480-09}
K.~Julien and M.~Watson.
\newblock Efficient multi-dimensional solution of {PDE}s using {C}hebyshev
  spectral methods.
\newblock {\em J. Comp. Phys.}, 228:1480--1503, 2009.

\bibitem{Livermore-229-2046-10}
P.~W. Livermore.
\newblock Galerkin orthogonal polynomials.
\newblock {\em J. Comp. Phys.}, 229:2046--2060, 2010.

\bibitem{Doha-Bhrawy-64-558-12}
E.~H. Doha and A.~H. Bhrawy.
\newblock An efficient direct solver for multidimensional elliptic {R}obin
  boundary value problems using a {L}egendre spectral-{G}alerkin method.
\newblock {\em Comp. Math. Appl.}, 64:558--571, 2012.

\bibitem{Slevinsky-Olver-332-290-17}
R.~M. Slevinsky and S.~Olver.
\newblock A fast and well-conditioned spectral method for singular integral
  equations.
\newblock {\em J. Comp. Phys.}, 332:290--315, 2017.

\bibitem{Strang-107-12413-10}
G.~Strang.
\newblock Fast transforms: {B}anded matrices with banded inverses.
\newblock {\em Proc. Nat. Acad. Sci.}, 107:12413--12416, 2010.

\bibitem{Hansen-254-2092-08}
A.~C. Hansen.
\newblock On the approximation of spectra of linear operators on {H}ilbert
  spaces.
\newblock {\em J. Funct. Anal.}, 254:2092--2126, 2008.

\bibitem{Qi-56-105-84}
L.~Qi.
\newblock Some simple estimates for singular values of a matrix.
\newblock {\em Linear Algebra Appl.}, 56:105--119, 1984.

\bibitem{Vandebril-Van-Barel-Mastronardi-1-08}
R.~Vandebril, M.~Van Barel, and N.~Mastronardi.
\newblock {\em Matrix Computations and Semiseparable Matrices}, volume 1:
  Linear Systems.
\newblock Johns Hopkins University Press, Baltimore, MD, 2008.

\bibitem{Iserles-Webb-18}
A.~Iserles and M.~Webb.
\newblock Orthogonal systems with a skew-symmetric differentiation matrix.
\newblock {\em Found. Comput. Math.}, 2018.

\bibitem{Driscoll-Hale-36-108-16}
T.~A. Driscoll and N.~Hale.
\newblock Rectangular spectral collocation.
\newblock {\em IMA J. Numer. Anal.}, 36:108--132, 2016.

\bibitem{ApproxFun}
S.~Olver, G.~Goretkin, R.~M. Slevinsky, and A.~Townsend.
\newblock {\tt https://github.com/ApproxFun/ApproxFun.jl}.
\newblock {\em GitHub}, 2016.

\bibitem{Schrodinger-28-1049-26}
E.~{S}chr{\"o}dinger.
\newblock An undulatory theory of the mechanics of atoms and molecules.
\newblock {\em Phys. Rev.}, 28:1049--1070, 1926.

\bibitem{Gaudreau-Slevinsky-Safouhi-337-261-13}
P.~Gaudreau, R.~M. Slevinsky, and H.~Safouhi.
\newblock An asymptotic expansion for energy eigenvalues of anharmonic
  oscillators.
\newblock {\em Ann. Phys.}, 337:261--277, 2013.

\bibitem{Weyl-71-441-12}
H.~Weyl.
\newblock Das asymptotische {V}erteilungsgesetz der {E}igenwerte linearer
  partieller {D}ifferentialgleichungen (mit einer {A}nwendung auf die {T}heorie
  der {H}ohlraumstrahlung).
\newblock {\em Math. Ann.}, 71:441--479, 1912.

\bibitem{Gutierrez-Laura-185-507-95}
R.~H. Gutierrez and P.~A.~A. Laura.
\newblock Vibrations of non-uniform rings studied by means of the differential
  quadrature method.
\newblock {\em J. Sound Vibration}, 185:507--513, 1995.

\bibitem{Henrici-11-281-63}
P.~Henrici.
\newblock Bounds for eigenvalues of certain tridiagonal matrices.
\newblock {\em J. SIAM}, 11:281--290, 1963.

\bibitem{Porsching-8-437-66}
T.~A. Porsching.
\newblock Diagonal similarity transformations which isolate {G}erschgorin
  disks.
\newblock {\em Numer. Math.}, 8:437--443, 1966.

\bibitem{Nakatsukasa-Noferini-1711-00495}
Y.~Nakatsukasa and V.~Noferini.
\newblock Inertia laws and localization of real eigenvalues for generalized
  indefinite eigenvalue problems.
\newblock arXiv:1711.00495, 2017.

\bibitem{Charalambides-Waleffe-46-280-08}
M.~Charalambides and F.~Waleffe.
\newblock Spectrum of the {J}acobi tau approximation for the second derivative
  operator.
\newblock {\em SIAM J. Numer. Anal.}, 46:280--294, 2008.

\bibitem{Schwarz-12-231-68}
H.~R. Schwarz.
\newblock Tridiagonalization of a symmetric band matrix.
\newblock {\em Numer. Math.}, 12:231--241, 1968.

\bibitem{Kaufman-26-551-00}
L.~Kaufman.
\newblock Band reduction algorithms revisited.
\newblock {\em ACM Trans. Math. Software}, 26:551--567, 2000.

\bibitem{Crawford-16-41-73}
C.~R. Crawford.
\newblock Reduction of a band-symmetric generalized eigenvalue problem.
\newblock {\em Comm. ACM}, 16:41--44, 1973.

\bibitem{Wilkinson-Jacobs-77}
J.~H. Wilkinson.
\newblock Some recent advances in numerical linear algebra.
\newblock In D.~A.~H. Jacobs, editor, {\em The State of the Art in Numerical
  Analysis}. Academic Press, New York, 1977.

\bibitem{Kaufman-14-372-93}
L.~Kaufman.
\newblock An algorithm for the banded symmetric generalized matrix eigenvalue
  problem.
\newblock {\em SIAM J. Matrix Anal. Appl.}, 14:372--389, 1993.

\bibitem{Webb-Thesis-17}
M.~Webb.
\newblock {\em Isospectral algorithms, {T}oeplitz matrices and orthogonal
  polynomials}.
\newblock PhD thesis, University of Cambridge, 2017.

\bibitem{Colbrook-Hansen-IQR-18}
M.~J. Colbrook and A.~C. Hansen.
\newblock On the infinite-dimensional {QR} algorithm.
\newblock submitted to Numer. Math., 2018.

\bibitem{Volkmer-20-39-04}
H.~Volkmer.
\newblock Error estimates for {R}ayleigh--{R}itz approximations of eigenvalues
  and eigenfunctions of the {M}athieu and spheroidal wave equation.
\newblock {\em Constr. Approx.}, 20:39--54, 2004.

\bibitem{Laskin-268-298-00}
N.~Laskin.
\newblock Fractional quantum mechanics and {L\'e}vy path integrals.
\newblock {\em Phys. Lett. A}, 268:298--305, 2000.

\bibitem{Laskin-66-056108-1-02}
N.~Laskin.
\newblock Fractional {S}chr{\"o}dinger equation.
\newblock {\em Phys. Rev. E.}, 66:056108--1--056108--7, 2002.

\bibitem{Mathieu-137-68}
E.~Mathieu.
\newblock M\'emoire sur le mouvement vibratoire d'une membrane de forme
  elliptique.
\newblock {\em J. Math. Pures et Appliqu\'ees}, pages 137--203, 1868.

\bibitem{Kato-80}
T.~Kato.
\newblock {\em Perturbation Theory for Linear Operators}.
\newblock Springer-Verlag, New York, NY, 1980.

\bibitem{Kwasnicki-20-7-17}
M.~Kwa{\'s}nicki.
\newblock Ten equivalent definitions of the fractional {L}aplace operator.
\newblock {\em Frac. Calc. and Appl. Anal.}, 20:7--51, 2017.

\bibitem{Lischke-et-al-1801-09767}
A.~Lischke, G.~Pang, M.~Gulian, F.~Song, C.~Glusa, X.~Zheng, Z.~Mao, W.~Cai,
  M.~M. Meerschaert, M.~Ainsworth, and G.~E. Karniadakis.
\newblock What is the fractional {L}aplacian? {A} comparative review with new
  results.
\newblock arXiv:1801.09767, 2018.

\bibitem{Townsend-Olver-299-106-15}
A.~Townsend and S.~Olver.
\newblock The automatic solution of partial differential equations using a
  global spectral method.
\newblock {\em J. Comput. Phys.}, 299:106--123, 2015.

\bibitem{Olver-Townsend-Vasil-1902-04863}
S.~Olver, A.~Townsend, and G.~Vasil.
\newblock A sparse spectral method on triangles.
\newblock arXiv:1902.04863, 2019.

\bibitem{Borges-Gragg-11-93}
C.~F. Borges and W.~B. Gragg.
\newblock A parallel divide and conquer algorithm for the generalized real
  symmetric definite tridiagonal eigenproblem.
\newblock In L.~Reichel, A.~Ruttan, and R.~S. Varga, editors, {\em Numerical
  Linear Algebra and Scientific Computing}, pages 11--29. de Gruyter, 1993.

\bibitem{Gu-Eisenstat-16-172-95}
M.~Gu and S.~C. Eisenstat.
\newblock A divide-and-conquer algorithm for the symmetric tridiagonal
  eigenproblem.
\newblock {\em SIAM J. Matrix Anal. Appl.}, 16:172--191, 1995.

\bibitem{Horning-Townsend-1901-04533}
A.~Horning and A.~Townsend.
\newblock {FEAST} for differential eigenvalue problems.
\newblock arXiv:1901.04533, 2019.

\end{thebibliography}

\end{document}